\numberwithin{equation}{section}
\newtheorem*{rep@theorem}{\rep@title}
\newcommand{\newreptheorem}[2]{%
\newenvironment{rep#1}[1]{%
 \def\rep@title{#2 \ref{##1}}%
 \begin{rep@theorem}}%
 {\end{rep@theorem}}}
\newcommand{\Mod}[1]{\ (\mathrm{mod}\ #1)}
\newcommand{\simfin }{\sim_{\mathrm{fin}}}
\newcommand{\simd}{\sim_{\mathrm{d}}}
\theoremstyle{plain}
\newtheorem{Th}{Theorem}[section]
\newtheorem{Lemma}[Th]{Lemma}
\newtheorem{Cor}[Th]{Corollary}
\newtheorem{Prop}[Th]{Proposition}
 \theoremstyle{definition}
\newtheorem{Def}[Th]{Definition}
\newtheorem{Rem}[Th]{Remark}
\newtheorem{?}[Th]{Problem}
\begin{document}

\title{Bounds for the rank of the finite part of operator $K$-Theory}

\author[S.K.Samurka\c{s}]{S\"{u}leyman Ka\u{g}an SAMURKA\c{S}}

\address{Texas A\&M University \\ Department of Mathematics \\
77840-College Station \\Texas \\USA} 

\email{kagan@math.tamu.edu}

\subjclass[2010]{Primary: 46L80. Secondary: 19M05}

\keywords{finite part of operator K-theory, structure group, positive scalar curvature metric, polynomially full groups} 

\begin{abstract}
We derive a lower and an upper bound for the rank of the finite part of operator $K$-theory groups of maximal and reduced $C^*$-algebras of finitely generated groups.
The lower bound is based on the amount of polynomially growing conjugacy classes of finite order elements in the group.
The upper bound is based on the amount of torsion elements in the group.
We use the lower bound to give lower bounds for the structure group $S(M)$ and the group of positive scalar curvature metrics $P(M)$ for an oriented manifold $M$.

We define a class of groups called ``polynomially full groups'' for which the upper bound and the lower bound we derive are the same.
We show that the class of polynomially full groups contains all virtually nilpotent groups.
As example, we give explicit formulas for the ranks of the finite parts of operator $K$-theory groups for the finitely generated abelian groups, the symmetric groups and the dihedral groups.
\end{abstract}

\maketitle

\section{Introduction}
The purpose of this paper is to derive a lower bound for the rank of the finite part of the operator $K$-theory of maximal and reduced $C^{*}$-algebras, and use that lower bound to study the non-rigidity
of the manifolds and the space of positive scalar curvature metrics on a manifold.
Moreover, we derive an upper bound for the rank of the finite part, and introduce a class of groups called ``polynomially full groups'' for which the upper bound and the lower bound we derive are the same.

Given a manifold $M,$ we can ask the following question: How many ``distinct'' manifolds exist which are homotopy equivalent to $M?$
Two manifolds are considered to be ``distinct'' if they are not homeomorphic.
The answer to the question above is obviously related to the non-rigidity of the manifold $M.$
The more ``distinct'' manifolds homotopy equivalent to $M$ exists, the less ``rigid'' $M$ is.

Given a compact oriented manifold $M,$ the structure group $S(M)$ of $M$ is defined to be the abelian group generated by the equivalence classes of elements of the form $(f,M\textprime),$
where  $M\textprime$ is a compact oriented manifold and $f:M\textprime\rightarrow M$ is an orientation preserving homotopy equivalence.

If a compact smooth spin manifold $M$ has a positive scalar curvature metric and the dimension of $M$ is greater than or equal to $5,$
then $P(M)$ is defined to be the abelian group of equivalent classes of all positive scalar curvature metrics on M. For a more precise definition we refer to 
\cite[Section 4]{Yu}.

Weinberger, Xie and Yu \cite{WeinbergerXieYu} use the higher rho invariant to study the structure group $S(M).$

Weinberger and Yu \cite{Yu} use linearly independent elements in $K_{0}(C^{*}G)$ whose linear span intersects trivially with the image of the assembly map
$$\mu:K^{G}_{0}(EG)\rightarrow K_{0}(C^{*}G),$$
to give lower bounds for the ranks of the groups $S(M)$ and $P(M),$
where $G$ is the fundamental group of $M$ and $EG$ is the universal cover of the classifying space $BG.$

So in order to give lower bounds for the ranks of the groups $S(M)$ and $P(M),$ a general strategy would be to construct linearly independent elements 
in $K_{0}(C^{*}G)$ whose linear span intersects trivially with the image of the assembly map
$$\mu:K^{G}_{0}(EG)\rightarrow K_{0}(C^{*}G).$$

Given a finite order element $g\in G$ with $\operatorname{order}(g)=d,$ define 
\[p_g=\frac{1+g+g^2+\cdots +g^{d-1}}{d}\in\mathbb{C}G \subseteq C^{*}G.\]
It is not hard to show that $p_g$ is a projection (i.e. $p_g^2=p_g^*=p_g$).
So $p_g$ gives an element in $K_{0}(C^{*}G).$

For distinct finite order elements $g_1,g_2,\ldots ,g_n,$ in order to prove that $p_{g_1},p_{g_2},\ldots ,p_{g_n}$ are linearly independent in $K_{0}(C^{*}G),$
we can use homomorphisms
$$\rho_{i}:K_0(C^*G)\rightarrow \mathbb{C},$$
by mapping the $p_{g_j}$'s to $\mathbb{C}^n$ and showing that their images are linearly independent in $\mathbb{C}^n.$

So the problem reduces to find homomorphisms
$$\rho_{i}:K_0(C^*G)\rightarrow \mathbb{C}.$$
One way of finding such homomorphisms is to use trace maps on the algebra $C^*G.$
However, it is difficult (if possible) to construct different trace maps
$\tau:C^*G\rightarrow\mathbb{C}.$
Fortunately, we don't have to construct such traces.

We say a subalgebra $\mathcal{A}$ of $C^{*}_{r}G$ is smooth if it is stable under holomorphic functional calculus.
For a smooth dense subalgebra $\mathcal{A}$ of $C^{*}_{r}G,$ we have 
$$K_{0}(\mathcal{A})\cong K_{0}(C^{*}_{r}G),$$
where the isomorphism is induced by the inclusion map.
Hence, if we find trace maps 
$$\tau_{i}:\mathcal{A}\rightarrow\mathbb{C},$$
then they induce homomorphisms
$$\tau_{i}:K_{0}(C^{*}_{r}G)\cong K_{0}(\mathcal{A})\rightarrow\mathbb{C}.$$
Thus, composing with the homomorphism $K_{0}(C^{*}G)\rightarrow K_{0}(C^{*}_{r}G)$
we get homomorphisms
$$\rho_{i}:K_0(C^*G)\rightarrow \mathbb{C}.$$

For all $h\in G,$ let $\tau_{h} : \mathbb{C}G\rightarrow\mathbb{C}$ be defined as:
$$\tau_{h}\Big(\sum_{g\in G} a_{g}\cdot g\Big) := \sum_{g\in C(h)} a_{g},$$
where $C(h)$ is the conjugacy class of $h$ in $G.$
It is easy to see that $\tau_{h}$ is a trace map on $\mathbb{C}G$ \cite{St}.
So the problem reduces to lifting the $\tau_h$'s to trace maps on a suitable smooth and dense subalgebra $\mathcal{A}$ of $C^{*}_{r}G.$

For this we can define seminorms on $\mathbb{C}G$ and take the completion of $\mathbb{C}G$ with respect to those seminorms.
The completion should be as small as possible so that we can lift the $\tau_h$'s.

Now we describe our results in more detail.

For a group $G,$ let $G^{fin}$ be the subset of $G$ of finite order elements. Now define a relation $\simfin $ on $G^{fin}$ as follows:
$g\simfin  h$ 
if and only if there exists $\gamma\in G$ such that $p_g=\gamma p_h\gamma^{-1}$. It is easy to see that $\simfin$ is an equivalence relation on $G^{fin}.$ Now we define 
$$\mathcal{F}_G:=|G^{fin}/_{\simfin} |.$$
In the following, we give an equivalent definition for $\mathcal{F}_G$ that we use in the proofs.
For all $d \in \mathbb{N}$ define $G^{fin}_{d}:=\{g \in G^{fin}\ |\ \operatorname{order}(g)=d \}.$

In the following, we define an equivalence relation on $G^{fin}_{d}.$
\begin{Def}
For a group G we define the relation $\simd $ on $G^{fin}_{d}$ as follows: 
$g\simd h$ if and only if $\exists\ a\in\mathbb{N}$ such that $g^{a} \in C(h)$, where $C(h)=\{f h f^{-1} | f \in G\}.$
\end{Def}

It is easy to verify that $\simd $ is an equivalence relation on $G^{fin}_{d}.$

Next, we give an equivalent definition for $\mathcal{F}_G.$

\begin{Def}
Let $\widetilde{G}^{fin}_{d} := G^{fin}_{d}/_{\simd} .$ 
Define $\mathcal{F}_G:=\sum_{d=1}^{\infty} |\widetilde{G}^{fin}_{d}|.$
\end{Def}

\begin{Rem}
It is not hard to see that given $g,h\in G^{fin}$ we have 
$$g\simfin h \Leftrightarrow \exists d\in\mathbb{N},\ \operatorname{order}(g)=\operatorname{order}(h)=d \text{ and } g\simd h.$$
So the two definitions of $\mathcal{F}_G$ are the same.
\end{Rem}

Following Weinberger and Yu \cite{Yu}, we define $K_0^{fin}(C^* G)$ to be the subgroup of $K_0(C^* G)$ generated by the set 
$$\{[p_g]\ :\ g\in G^{fin}\},$$
where $[p_g]$ denotes the class of the projection $p_g$ in $K_0(C^* G).$ We define $K_0^{fin}(C^{*}_r G)$ similarly. 
Given $g,h\in G^{fin}$ with $g\simfin h,$ we have $[p_g]=[p_h]\in K_0(C^* G).$ Hence, $K_0^{fin}(C^* G)$ has rank at most $\mathcal{F}_G.$
Using the natural surjection 
$$K_0^{fin}(C^* G)\twoheadrightarrow K_0^{fin}(C_r^* G)$$
induced by the identity map $\operatorname{id}:\mathbb{C}G\rightarrow\mathbb{C}G,$ we can conclude that $K_0^{fin}(C_r^* G)$ also has rank at most $\mathcal{F}_G.$

The following result is proved in Section ~\ref{frameworkproof}.

\begin{Th}\label{framework}
 Let $S\subseteq G^{fin}.$ If there exists a smooth subalgebra $\mathcal{A}$ of $C^{*}_{r}G$ containing $\mathbb{C}G$ 
 and if $\forall h\in S$ there exists a trace function 
 $$\widetilde{\tau}_{h}:\mathcal{A}\rightarrow\mathbb{C}$$
 extending the trace function $\tau_{h}:\mathbb{C}G\rightarrow\mathbb{C},$
 then we have
 $$\operatorname{rank}(K^{fin}_0(C^{*}_{r}G))\geq |S/_{\simfin} |$$
 and for the assembly map $\mu:K^{G}_{0}(EG)\rightarrow K_{0}(C^{*}G),$
 we have 
 $$\operatorname{\operatorname{Im}}\mu\cap K^{S}_{0}(C^{*}G)=\{0\},$$
 where $K^{S}_{0}(C^{*}G)$ is the subgroup of $K_{0}(C^{*}G)$ generated by the set $\{[p_{g}]:g\in S\}$ and 
 $EG$ is the universal cover of the classifying space $BG.$
\end{Th}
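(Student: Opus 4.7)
The plan is to use the given traces $\widetilde{\tau}_h$ to build, via the smooth subalgebra isomorphism $K_0(\mathcal{A})\cong K_0(C^{*}_{r}G)$, homomorphisms $\rho_h\colon K_0(C^{*}G)\to\mathbb{C}$, and then prove linear independence of the relevant $[p_{h_i}]$'s by showing that the matrix of trace pairings is triangular with nonzero diagonal. Concretely, I pick representatives $h_1,\ldots,h_n$ of the classes in $S/_{\simfin}$ ordered so that $\operatorname{order}(h_i)$ is non-decreasing in $i$. For each $i$, the trace $\widetilde{\tau}_{h_i}\colon\mathcal{A}\to\mathbb{C}$ induces a homomorphism $K_0(\mathcal{A})\to\mathbb{C}$; composing with the natural map $K_0(C^{*}G)\to K_0(C^{*}_{r}G)\cong K_0(\mathcal{A})$ yields $\rho_i\colon K_0(C^{*}G)\to\mathbb{C}$. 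Since each $p_{h_j}$ sits in $\mathbb{C}G$, we get $\rho_i([p_{h_j}])=\tau_{h_i}(p_{h_j})$.

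Next I compute
\[
M_{ij}=\tau_{h_i}(p_{h_j})=\frac{1}{d_j}\bigl|\{k\in\{0,\ldots,d_j-1\}:h_j^{k}\in C(h_i)\}\bigr|,
\]
where $d_j=\operatorname{order}(h_j)$. The diagonal entry $M_{ii}\geq 1/d_i>0$ because $h_i$ itself lies in $C(h_i)$. For $i>j$, the ordering forces either $\operatorname{order}(h_i)>\operatorname{order}(h_j)$, in which case no power $h_j^{k}$ can match the order of $h_i$ and hence cannot be conjugate to it, or else $\operatorname{order}(h_i)=\operatorname{order}(h_j)$, in which case $h_i\not\simfin h_j$ is exactly $h_i\not\simd h_j$ by the remark, and again no power of $h_j$ is conjugate to $h_i$. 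Either way $M_{ij}=0$, so $M$ is upper triangular with positive diagonal and therefore invertible. This forces the classes $[p_{h_1}],\ldots,[p_{h_n}]$ to be linearly independent in $K^{fin}_0(C^{*}_{r}G)$, which is the desired lower bound on the rank.

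For the assembly-map assertion, suppose $\xi=\sum_j a_j[p_{h_j}]$ lies in $\operatorname{Im}\mu\cap K^{S}_{0}(C^{*}G)$, so I may assume each $h_j\ne e$. The extra ingredient I will invoke is the standard fact that for a nontrivial finite-order element $h$ the delocalized trace $\widetilde{\tau}_h$ vanishes on the image of $\mu$, reflecting that in the paper's convention $EG$ is the \emph{free} contractible $G$-space so the local contribution at any $h\neq e$ is empty; this is exactly the mechanism exploited in Weinberger--Yu \cite{Yu}. Granting this, $\rho_i(\xi)=0$ for every $i$, i.e.\ $\sum_j a_j M_{ij}=0$ for all $i$, and invertibility of $M$ forces every $a_j=0$. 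The main obstacle I anticipate is pinning down this delocalized-trace vanishing on $\operatorname{Im}\mu$ rigorously in the maximal $C^{*}$-algebra setting (the composition with $K_0(C^{*}G)\to K_0(C^{*}_{r}G)$ must behave correctly with respect to $\mu$); everything else reduces to the triangular matrix computation above and routine trace-extension bookkeeping.
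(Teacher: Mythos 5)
Your proposal follows the paper's proof essentially verbatim for the rank bound: the same choice of pairwise non-equivalent representatives ordered by increasing order, the same upper-triangular matrix of trace pairings $\widetilde{\tau}_{h_i}([p_{h_j}])$ with diagonal entries at least $1/d_i$, and the same conclusion of $\mathbb{Z}$-linear independence via the isomorphism $K_0(\mathcal{A})\cong K_0(C^{*}_{r}G)$. For the assembly-map statement the paper uses exactly the delocalized-trace-vanishing mechanism you invoke and, like you, cites it from Weinberger--Yu rather than proving it; its only additional content is the precise form of the citation, namely a passage to the Schatten-class group algebra $\mathcal{S}_m G$ and the cyclic $n$-cocycle $\tau_g^{(n)}$, which induces a map on $K_0(\mathcal{S}_m G)$ vanishing on the image of the corresponding assembly map while computing $\tau_g$ on classes coming from $\mathbb{C}G$, after which a commutative diagram transports the contradiction to $K_0(C^{*}G)$ exactly as in your triangularity argument (the paper in fact only needs the single trace attached to the representative of maximal order).
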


Given a finitely generated group $G$ with word length norm $\lVert . \rVert_w$ and given $h\in G,$ we define $C(h):=\{ghg^{-1}\ :\ g\in G\},$  
$C_l(h):=\{g\in C(h):\lVert g\rVert_w=l\}$ and $n_{h,l}:=|C_l(h)|.$ 
\begin{Def}
We say that $C(h)$ has polynomial growth, if $\exists c\in\mathbb R_{>0}$ $\exists d\in\mathbb{N}$ such that $n_{h,l}\leq c\cdot l^d$ for all $l\in\mathbb{N}$ 
and define $G^{pol}:=\{g\in G\ |\ C(g) \text{ has polynomial growth}\}.$
\end{Def}

In the following, we define the maximum number of non-equivalent finite order elements we can choose from $G^{pol}.$
\begin{Def}
Let $G$ be a finitely generated group. We define 
$$\mathcal{F}^{pol}_G := |(G^{pol}\cap G^{fin})/_{\simfin} |.$$
\end{Def}

In Section ~\ref{polynomialproof}, we prove that the hypotheses of Theorem ~\ref{framework} are satisfied for $S=G^{pol}\cap G^{fin}$ and $\mathcal{A}=C^{pol}_{S}G$ defined in the proof.

Hence, we get the following main result about polynomial growth in conjugacy classes:

\begin{Th} \label{mainpol}
Let $G$ be a finitely generated group. We have 
$$\mathcal{F}^{pol}_G\leq\operatorname{rank}(K^{fin}_0(C^{*}_r G))\leq\operatorname{rank}(K^{fin}_0(C^{*} G))\leq\mathcal{F}_G$$
and for the assembly map $\mu : K_{0}^{G}(EG)\rightarrow K_{0}(C^{*}G),$ we have 
$$\operatorname{\operatorname{Im}}(\mu )\cap K_{0}^{fin,pol}(C^{*}G)=\{0\},$$
where $K_{0}^{fin,pol}(C^{*}G)$ is the subgroup of $K_{0}(C^{*}G)$ generated by the set 
$$\{[p_g]\ :\ g\in G^{fin}\cap G^{pol}\}.$$
\end{Th}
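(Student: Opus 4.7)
The chain of inequalities naturally splits into three parts. The upper bound $\operatorname{rank}(K^{fin}_0(C^{*}G))\leq\mathcal{F}_G$ is essentially already recorded in the discussion preceding the theorem: $K^{fin}_0(C^*G)$ is generated by the classes $[p_g]$ with $g\in G^{fin}$, and $g\simfin h$ implies $[p_g]=[p_h]$, so the generating set has at most $\mathcal{F}_G$ distinct elements. The middle inequality is immediate from the surjection $K_0^{fin}(C^*G)\twoheadrightarrow K_0^{fin}(C_r^*G)$ induced by the identity on $\mathbb{C}G$. Thus the real content is the leftmost inequality $\mathcal{F}^{pol}_G\leq\operatorname{rank}(K^{fin}_0(C^{*}_rG))$ together with the assembly-map statement $\operatorname{Im}(\mu)\cap K_0^{fin,pol}(C^*G)=\{0\}$.

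My plan is to obtain both at once by applying Theorem~\ref{framework} to $S:=G^{fin}\cap G^{pol}$; note that $|S/_{\simfin}|=\mathcal{F}^{pol}_G$ by definition, and the intersection statement is then verbatim the second conclusion of that theorem. Everything therefore reduces to constructing a smooth subalgebra $\mathcal{A}\supseteq\mathbb{C}G$ of $C^*_rG$ and, for every $h\in S$, a continuous trace $\widetilde{\tau}_h\colon\mathcal{A}\to\mathbb{C}$ extending $\tau_h$. For $\mathcal{A}$ (the algebra named $C^{pol}_{S}G$ in the text) I would take the completion of $\mathbb{C}G$ under a countable family of submultiplicative weighted seminorms built from the word length $\lVert\cdot\rVert_w$, of the type $\lVert\sum a_gg\rVert_k=\sum_g|a_g|(1+\lVert g\rVert_w)^k$, modified as needed so that $\mathcal{A}$ becomes a Fr\'echet $*$-algebra sitting in $C^*_rG$. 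Given $h\in S$ with $n_{h,l}\leq c_h l^{d_h}$, the bound
\[
|\tau_h(a)|\leq\sum_{l\geq 0}c_h\, l^{d_h}\max_{g\in C_l(h)}|a_g|
\]
shows that $\tau_h$ is dominated by $\lVert\cdot\rVert_k$ for any $k$ slightly larger than $d_h$, so it extends continuously, and the extension is automatically a trace by density.

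The main obstacle I foresee is smoothness: one must check that $\mathcal{A}$ is stable under holomorphic functional calculus in $C^*_rG$, so that $K_0(\mathcal{A})\cong K_0(C^*_rG)$ and the extended traces $\widetilde{\tau}_h$ actually descend to homomorphisms $K_0(C^*_rG)\to\mathbb{C}$ in the way the framework of Theorem~\ref{framework} demands. Density in $C^*_rG$ and continuity of the inclusion are routine since the seminorms dominate the $\ell^1$-norm and hence the operator norm on $\ell^2G$, but spectral invariance is the quantitative point: I would handle it by fitting $\mathcal{A}$ inside a Jolissaint-type rapidly decreasing algebra and adapting the standard spectral-invariance argument. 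The subtlety to exploit is that polynomial growth is only required along the specific conjugacy classes whose traces are extended, not on the whole group, so the weights may be kept modest enough that the completion remains dense and smooth without demanding global polynomial growth or property $(RD)$ of $G$. Once smoothness is established, Theorem~\ref{framework} delivers both the lower bound and the triviality of $\operatorname{Im}(\mu)\cap K_0^{fin,pol}(C^*G)$ simultaneously.
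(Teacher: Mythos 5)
Your reduction of the theorem is exactly the paper's: the upper bound $\operatorname{rank}(K^{fin}_0(C^{*}G))\leq\mathcal{F}_G$ and the middle inequality come from the remarks preceding the statement, and everything else is obtained by feeding $S=G^{fin}\cap G^{pol}$ into Theorem~\ref{framework}. The gap is in the one step you correctly flag as the crux and then do not actually close: smoothness of $\mathcal{A}$. The algebra you propose --- the completion of $\mathbb{C}G$ in weighted $\ell^1$-type seminorms $\lVert\sum a_g g\rVert_k=\sum_g|a_g|(1+\lVert g\rVert_w)^k$, or a Jolissaint-type rapidly decreasing algebra --- is in general \emph{not} stable under holomorphic functional calculus in $C^*_rG$. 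Already the unweighted $\ell^1(G)$ fails to be spectrally invariant in $C^*_rG$ (e.g.\ for free groups), and the weighted completions are smaller still; the Jolissaint algebra $H^\infty_L(G)$ is known to be spectral only under property $(RD)$. The paper explicitly advertises that it does \emph{not} assume property $(RD)$ (this is precisely how it improves on Gong's result), so ``adapting the standard spectral-invariance argument'' is not available: that argument is exactly what requires $(RD)$. Your observation that polynomial growth is only needed along the relevant conjugacy classes helps with extending the traces, but it does nothing for spectral invariance, which is a statement about inverses of arbitrary elements of $\mathcal{A}$, not about the conjugacy classes indexing the traces.

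What the paper does instead is qualitatively different: for each $h\in G^{pol}$ it completes $\mathbb{C}G$ with respect to the operator norm together with a seminorm $\lVert\cdot\rVert_{\mu,h}$ built from Roe's dominating function $\mu_A(R)$, which measures propagation (how much $Au$ leaks outside an $R$-neighbourhood of $\operatorname{Supp}u$) rather than coefficient decay. Smoothness of the resulting algebra $C^{pol}_hG$ is then proved directly via a Neumann-series estimate $\mu_{(\operatorname{Id}-A)^{n}}(R)\leq\sum_{k=1}^{n-1}5^{k}\lVert\operatorname{Id}-A\rVert_{op}^{n-1}\mu_{A}(R/2^{k})$, with no hypothesis on $G$ whatsoever; and the trace $\tau_h$ extends because $\mu_A(R)\leq\lVert A\rVert_{\mu,h}R^{-b_h}$ combines with Cauchy--Schwarz and the polynomial bound on $n_{h,l}$ to give absolute convergence of $\sum_{g\in C(h)}A(g,e)$. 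Your trace-extension estimate is fine for your algebra, but the algebra itself would not satisfy the hypotheses of Theorem~\ref{framework} for a general finitely generated group, so the argument as proposed does not go through.
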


The proof of this result is in Section ~\ref{polynomialproof} of this paper. Note that this result follows from the injectivity part of the Baum-Connes conjecture ~\cite{BaumConnes}.

Gong \cite{Gong} finds a lower bound for the rank of $K_0^{fin}(C_r^* G)$ for the groups with property (RD) and conjugacy classes having poynomial growth.
In our results, we don't require property (RD) and also improve the lower bound.

The importance of these results lies in the following:

\begin{reptheorem}{structuregroup}\cite{Yu}
Let $M$ be a compact oriented manifold with dimension $4k-1$ $(k>1).$ Suppose $\pi_1(M)=G$ and $g_1,\cdots, g_n$ be finite order elements in $G$ such that $g_i\neq e$
for all $i$ and $\{[p_{g_1}],\cdots,[p_{g_n}]\}$ generates an abelian group of $K_0(C^{*}G)$ with rank n. Suppose that any nonzero element in the abelian subgroup of 
$K_0(C^{*}G)$ generated by $\{[p_{g_1}],\cdots,[p_{g_n}]\}$ is not in the image of the map $\mu : K_{0}^{G}(EG)\rightarrow K_{0}(C^{*}G)$, then the rank of the structure group $S(M)$ is greater than or equal to n.
\end{reptheorem}

Combining this result with our results we get:

\begin{repcorollary}{structuregroupour}
For a compact oriented manifold $M$ with dimension $4k-1$ $(k>1),$
the rank of the structure group $S(M)$ is greater than or equal to $\mathcal{F}^{pol}_G-1,$
where $G=\pi_1(M).$
\end{repcorollary}

Another application of our results is the following:

Let $r_{fin}(G)$ be the rank of the abelian group $K^{fin}_{0}(C^{*}G)$ generated
by $[p_g]$ for all finite order elements $g\in G.$ Here $g$ is allowed to be the
identity element $e.$ So we have $\mathcal{F}_G\geq r_{fin}(G)= \operatorname{rank}(K^{fin}_{0}(C^{*}G))\geq\mathcal{F}^{pol}_G.$

\begin{reptheorem}{positivescalar}\cite{Yu}
  \begin{enumerate}
    \item
    Let $M$ be a compact smooth spin manifold with a positive scalar curvature metric and dimension $2k-1$ $(k>2).$
    The rank of the abelian group $P(M)$ is greater than or equal to $r_{fin}(G)-1.$

    \item
    Let $M$ be a compact smooth spin manifold with a positive scalar curvature metric and dimension $4k-1$ $(k>1).$
    The rank of the abelian group $P(M)$ is greater than or equal to $r_{fin}(G).$
  \end{enumerate}
\end{reptheorem}

Combining this result with our results, we get:

\begin{repcorollary}{positivescalarour}
Let $M$ be a compact smooth spin manifold with a positive scalar curvature metric.
Let $G=\pi_1(M).$
  \begin{enumerate}
    \item
    If $M$ has dimension $2k-1$ $(k>2),$ then the rank of the abelian group $P(M)$ is greater than or equal to $\mathcal{F}^{pol}_G-1.$

    \item
    If $M$ has dimension $4k-1$ $(k>1),$ then the rank of the abelian group $P(M)$ is greater than or equal to $\mathcal{F}^{pol}_G.$
  \end{enumerate}

\end{repcorollary}

This paper consists of 5 sections (including the introduction):
\begin{itemize}
  \item In Section 2, we prove our framework theorem (Theorem ~\ref{framework}).

  \item In Section 3, we recall dominating functions from \cite{Roe}. As Engel did in \cite{Engel}, using the dominating functions, we define a seminorm $\lVert .\rVert_{\mu,h}$ on $\mathbb{C}G$ for each $h\in G^{pol}.$
  Using the seminorm and the operator norm, we complete $\mathbb{C}G$ and get a smooth dense subalgebra of $C^*_r G.$ We call that algebra $C^{pol}_hG.$ 
  We also recall the trace functions on $\mathbb{C}G$ corresponding to an element in $G.$ Using the properties of the seminorms, we lift $\tau_h$ to a trace function $\widetilde{\tau}_h$ on $C^{pol}_{h}G.$

  \item In Section 4, we prove Theorem ~\ref{mainpol}. As applications, we derive lower bounds for the ranks of the structure group and the group of positive scalar curvature metrics of manifolds.

  \item In Section 5, we define the class of polynomially full groups.
  We show that subgroups, products, and finite extensions of polynomially full groups are also polynomially full.
  For polynomially full group $G,$ we show that
  $$K^{fin}_{0}(C^{*}_{r}G)\cong K^{fin}_{0}(C^{*}G)\cong\bigoplus_{i=1}^{\mathcal{F}_G}\mathbb{Z}.$$
  The class of polynomially full groups includes trivially all finite groups and finitely generated torsion-free groups. We show that it also includes all finitely generated virtually nilpotent groups.
  At the end of the section, we derive formulas for the number $\mathcal{F}_G,$ where $G$ is finitely generated abelian group, dihedral group, or symmetric group.
\end{itemize}

\section*{Acknowledgements}
The author would like to acknowledge Guoliang Yu for his invaluable guidance.
The author would also like to acknowledge Alexander Engel and Bogdan Nica for their valuable suggestions.

\section{Proof of Theorem ~\ref{framework}}\label{frameworkproof}

\begin{proof}
Since $\mathcal{A}$ is a smooth and dense subalgebra of $C^{*}_{r}G,$ we have 
$$K_{0}(\mathcal{A}) \cong K_{0}(C^{*}_{r}G),$$
where the isomorphism is induced by the inclusion map
$$i : \mathcal{A} \rightarrow C^{*}_{r}G\ .$$
Since the finite parts of $K_{0}(\mathcal{A})$ and $K_{0}(C^{*}_{r}G)$ are coming from $\mathbb{C}G,$
we have $$K_{0}^{fin}(\mathcal{A})\cong K_{0}^{fin}(C^{*}_{r}G).$$

Hence, for the first part of the theorem, it suffices to show that 
$$\operatorname{rank}(K_{0}^{fin}(\mathcal{A}))\geq |S/_{\simfin} |.$$
Let $K_{0}^{S}(\mathcal{A})$ be the subgroup of $K_{0}^{fin}(\mathcal{A})$ generated by the set $\{[p_g]:g\in S\}.$
Thus, it suffices to show that $\operatorname{rank}(K_{0}^{S}(\mathcal{A}))\geq |S/_{\simfin} |.$

Let $\{s_1,...,s_n\}$ be an arbitrary subset of $S$ such that, we have $s_i\nsim_{fin} s_j$ for $i\neq j.$
We are going to show that, the subgroup of $K_{0}^{S}(\mathcal{A})$ generated by the set $\{[p_{s_1}],\cdots [p_{s_n}]\}$
has rank $n.$ Therefore, we are going to conclude that 
$$\operatorname{rank}(K_{0}^{S}(\mathcal{A}))\geq |S/_{\simfin} |.$$

For all $i\in\{1,2,\cdots,n\}\ let\ d_i=\operatorname{order}(s_i)$ and assume $d_1\leq d_2\leq\cdots\leq d_n.$ 
We have all the traces $\widetilde{\tau}_{s_{i}} : \mathcal{A} \rightarrow \mathbb{C}$ defined. This gives us the homomorphisms (with abuse of notation)
$$\widetilde{\tau}_{s_{i}} : K_{0}^{S}(\mathcal{A}) \rightarrow \mathbb{C}.$$
Now define $$\mathcal{M}_{n}=
	\begin{pmatrix}
		\widetilde{\tau}_{s_{1}}([p_{s_{1}}]) & \widetilde{\tau}_{s_{1}}([p_{s_{2}}]) & \cdots & \widetilde{\tau}_{s_{1}}([p_{s_{n}}]) \\
		\widetilde{\tau}_{s_{2}}([p_{s_{1}}]) & \widetilde{\tau}_{s_{2}}([p_{s_{2}}]) & \cdots & \widetilde{\tau}_{s_{2}}([p_{s_{n}}]) \\
		\vdots  & \vdots  & \ddots & \vdots  \\
		\widetilde{\tau}_{s_{n}}([p_{s_{1}}]) & \widetilde{\tau}_{s_{n}}([p_{s_{2}}]) & \cdots & \widetilde{\tau}_{s_{n}}([p_{s_{n}}]) 
	\end{pmatrix},$$
	
where $p_{s_{j}}=\frac{1+s_{j}+...+s_{j}^{d_{j}-1}}{d_j}\in\mathbb{C}G\subseteq \mathcal{A}$ and $[p_{s_{j}}]$ shows the class in $K_{0}^{S}(\mathcal{A})$ represented by the projection $p_{s_{j}}.$
Now $\forall i,j\in\{1,2,...,n\}$ with $i>j$, there are 2 cases:
\subsection*{Case 1 ($d_i>d_j$)}
In this case, we have
$$\widetilde{\tau}_{s_{i}}([p_{s_{j}}])=\tau_{s_{i}}(p_{s_{j}})=\tau_{s_{i}}\Bigg(\frac{1+s_{j}+...+s_{j}^{d_{j}-1}}{d_j}\Bigg)$$
and since $\forall a\in\mathbb{N}\ $
$\operatorname{order}(s_{j}^{a})\leq \operatorname{order}(s_{j})=d_j<d_i=\operatorname{order}(s_{i}),$
we have\\
$\forall a\in\mathbb{N}\ s_{j}^{a}\notin C(s_i)$ (all elements from $C(s_i)$ have order $d_i$).
Thus, $\widetilde{\tau}_{s_{i}}([p_{s_{j}}])=0.$

\subsection*{Case 2 $(d_{i}=d_{j})$ and $s_{j} \nsim_{d} s_{i}$}
In this case, we have $\forall a\in\mathbb{N}\ s_{j}^{a}\notin C(s_{i})$ by definition of $\simd .$
So $\widetilde{\tau}_{s_{i}}([p_{s_{j}}])=0.$

Hence, $\mathcal{M}_{n}$ is an upper triangular matrix.
\\
Now $\forall i\in \{1,2,...,n\},$ we have $s_i\in C(s_i)$ so, 

\begin{align*}
	\widetilde{\tau}_{s_{i}}([p_{s_{i}}])	&=\tau_{s_{i}}(p_{s_{i}})\\
						&=\tau_{s_{i}}\Bigg(\frac{1+s_{i}+...+s_{i}^{d_{i}-1}}{d_i}\Bigg)\\
						&\geq \frac{1}{d_i}\ .
\end{align*}

Thus, the elements in the diagonal of $\mathcal{M}_{n}$ are non-zero. Hence, $det(\mathcal{M}_{n})\neq 0.$
So $\mathcal{M}_{n}$ has full rank. Thus, in $K_{0}^{S}(\mathcal{A})$, the elements 
$[p_{s_1}],...,[p_{s_n}]$ are $\mathbb{Z}$-linearly independent. Therefore, $\operatorname{rank}(K_{0}^{S}(\mathcal{A})) = n.$
Thus, we get $$\operatorname{rank}(K_{0}^{fin}(\mathcal{A}))\geq |S/_{\simfin} |.$$

Now, let's make some preliminary definitions for the proof of the second part of the Theorem ~\ref{framework}:

Let $\mathcal{H}$ be an infinite dimensional separable Hilbert space. Let $\mathcal{B}(\mathcal{H})$ be the set of bounded linear
operators on $\mathcal{H}.$
We define $\mathcal{S}_p\ :=\ \{T\in\mathcal{B}(\mathcal{H})\ |\ tr((T^{*}T)^{\frac{p}{2}})<\infty\},$
where $tr(P)\ :=\ \sum_{n\in\mathbb{N}}\langle Pe_n, e_n\rangle$ for an orthonormal basis $\{e_n\}_{n=1}^{\infty}$ and for a bounded linear operator $P\in\mathcal{B}(\mathcal{H}).$
We remark that, trace does not depend on the particular choice of an orthonormal basis. 
We call $\mathcal{S}_p$ the ring of Schatten $p$-class operators on an infinite dimensional and separable Hilbert space.
Now define $\mathcal{S}\ :=\ \cup_{p=1}^{\infty} \mathcal{S}_p.$
The ring $\mathcal{S}$ is called the ring of Schatten class operators. Let $\mathcal{S}G$ be the group algebra over the ring $\mathcal{S}$ \cite{Yu2}.
Let $j:\mathbb{C}G\rightarrow\mathcal{S}G$ be the inclusion homomorphism defined by:
$$j(a)=p_0a$$
for all $a\in\mathbb{C}G,$ where $p_0$ is a rank one projection in $\mathcal{S}.$

In the following, we show that nonzero elements in the finite part of $K_0(C^{*}G)$ generated by the set $\{[p_g]:g\in S\}$ are not in the image of the assembly map
$\mu : K_{0}^{G}(EG)\rightarrow K_{0}(C^{*}G),$ where $EG$ is the universal space for proper and free $G$-action.
In the proof, we use the $n$-cocycle $\tau_g^{(n)}$ on $\mathcal{S}_m G$ introduced in \cite{Yu}, which gives in some sense the extension of the classical trace $\tau_g.$
So we have a commutative diagram
$$\xymatrix{
  &K_0(\mathcal{S}_m G) \ar[r]^{\psi} \ar[d]_{(\tau_g^{(n)})_{*}}	&K_0(C^{*}G) \ar[dl]^{\widetilde{\tau}_g}\\
  & \mathbb{C}
}$$
where (with abuse of notation) $\widetilde{\tau}_g: K_0(C^{*} G)\rightarrow\mathbb{C}$ is the pullback of the homomorphism $\widetilde{\tau}_g: K_0(C^{*}_{r} G)\rightarrow\mathbb{C}.$
Recall that $K_{0}(\mathcal{A}) \cong K_{0}(C^{*}_{r}G).$

Assume there exists a non-zero $z\in \operatorname{Im}(\mu )\cap K_{0}^{S}(C^{*}G).$ Then $z=\sum_{i=1}^{s} c_i\cdot [p_{g_i}]$ for some pairwise non-equivalent $g_1,\cdots,g_s \in S$ and 
$c_1,\cdots,c_s\in\mathbb{Z}\setminus\{0\}.$ For all $i\in\{1,\cdots,s\}$ let $d_i=\operatorname{order}(g_i).$ Without loss of generality, we can assume $d_1\leq\cdots\leq d_s.$
Now let $g=g_s.$

Now let $z'=\sum_{i=1}^{s} c_i\cdot [j(p_{g_i})].$ We have $z'\in K_0(\mathcal{S}_m G)$ for some $m\in\mathbb{N}.$
Then we get $z'\in \operatorname{Im}(A),$ where 
$$A:H_0^{Or G}(EG, \mathbb{K}(\mathcal{S}_m)^{-\infty})\rightarrow K_0(\mathcal{S}_m G)$$
is the assembly map.

Let $n=2k$ be the smallest even number greater than or equal to $m.$
Define an $n$-cocycle $\tau_g^{(n)}$ on $\mathcal{S}_m G$ by:
$$\tau_g^{(n)}(a_0,a_1,\cdots ,a_n):=\sum_{\gamma\in C(g)}tr(\gamma^{-1}a_0a_1\cdots a_n)$$
for all $a_i\in\mathcal{S}_m G,$ where $tr:\mathcal{S}_1 G\rightarrow\mathbb{C},$ is the trace defined by:
$$tr(\sum_{\gamma\in G}b_{\gamma}\gamma):=trace(b_e)\ .$$
Since $\tau_g^{(n)}$ is an $n$-cocycle, it induces a homomorphism
$$(\tau_g^{(n)})_{*}:K_0(\mathcal{S}_m G)\rightarrow\mathbb{C}.$$

It is shown in \cite{Yu} that $(\tau_g^{(n)})_{*}([j(p)])= \tau_g(p)$ for all projections $p\in\mathbb{C}G$ and $(\tau_g^{(n)})_{*}(z')=0.$
So we have the commutative diagram
$$\xymatrix{
  &K_0(\mathcal{S}_m G) \ar[r]^{\psi} \ar[d]_{(\tau_g^{(n)})_{*}}	&K_0(C^{*}G) \ar[dl]^{\widetilde{\tau}_g}\\
  & \mathbb{C}
}$$
where (with abuse of notation) $\widetilde{\tau}_g: K_0(C^{*} G)\rightarrow\mathbb{C}$ is the pullback of the homomorphism $\widetilde{\tau}_g: K_0(C^{*}_{r} G)\rightarrow\mathbb{C}.$ We have $\psi(z')=z.$ Hence, we get 
$$\widetilde{\tau}_g(z)=\widetilde{\tau}_g(\psi(z'))=(\tau_g^{(n)})_{*}(z')=0.$$
However, we have $\widetilde{\tau}_g(z)=\widetilde{\tau}_g(\sum_{i=1}^{s} c_i\cdot [p_{g_i}])=\sum_{i=1}^{s} c_i\cdot \widetilde{\tau}_g([p_{g_i}])=k\cdot\frac{c_s}{d_s}$ for some $k\in\mathbb{N}.$
Thus, $\widetilde{\tau}_g(z)\neq 0.$
Contradiction shows that $\operatorname{Im}(\mu )\cap K_{0}^{S}(C^{*}G)=\{0\}.$
\end{proof}

\begin{Rem}
For any group $G,$ we can build up a matrix similar to the matrix in the proof of Theorem ~\ref{framework}, and show that
$p_{g}'s$ corresponding to pairwise non-equivalent $g's$ in $G^{fin}$ are linearly independent in $K_0(\mathbb{C}G),$ since all the traces 
$$\tau_g:\mathbb{C}G\rightarrow\mathbb{C}$$
are already defined. Hence, we can conclude that
$$K_0^{fin}(\mathbb{C}G)\cong\bigoplus_{i=1}^{\mathcal{F}_G}\mathbb{Z},$$
as soon as $\mathcal{F}_G\leq\aleph_0,$
where $\aleph_0$ is the cardinality of the set of the natural numbers $\mathbb{N},$
and $K_0^{fin}(\mathbb{C}G)$ is the subgroup of $K_0(\mathbb{C}G)$ generated by the idempotents
$$\{[p_g]:g\in G^{fin}\}.$$
\end{Rem}

\section{Dominating Functions, Seminorms, and Trace Functions}
In the first part of this section, we recall dominating functions from \cite{Roe}. As Engel did in \cite{Engel}, using the dominating functions, we define a seminorm $\lVert .\rVert_{\mu,h}$ on $\mathbb{C}G$ for each $h\in G^{pol}.$
Using the seminorms and the operator norm, we complete $\mathbb{C}G$ and get a smooth dense subalgebra of $C^*_r G.$ We call that algebra $C^{pol}_hG.$ 
In the second part of this section, we recall the trace functions on $\mathbb{C}G$ corresponding to an element in $G.$ Using the properties of the seminorms, we lift $\tau_h$ to a trace function $\widetilde{\tau}_h$ on $C^{pol}_{h}G$ for each $h\in G^{pol}.$

\subsection{Dominating Functions}
In the first part of this section, we recall the dominating functions, prove some properties about them, and using those functions, we define seminorms on $\mathbb{C}G.$
Completing $\mathbb{C}G$ with respect to those seminorms and the operator norm, we construct smooth dense subalgebras $C^{pol}_{h}G$ of $C^{*}_{r} G$ for $h\in G^{pol}.$

In the following, we recall preliminary notions for the definition of the dominating functions.
We use $\mathbb R_{>0}$ and $\mathbb R_{\geq 0}$ for the sets of positive and non-negative real numbers, respectively.

\begin{Def}
Given $u\in\ell^2 G$ define $\operatorname{Supp}\ u:=\{g\in G\ :\ u(g)\neq 0\}.$
Now for all $S\subseteq G$ and $R\in\mathbb R_{\geq 0}$, define $B_R(S):=\{g\in G\ :\ d_w(g,S)\leq R\},$
where $d_w$ is the metric induced by $\lVert .\rVert_{w}.$
Define $\lVert u\rVert_{S}:=(\sum_{g\in S}\lvert u(g)\rvert^2)^\frac{1}{2}.$
\end{Def}

In the following, we recall the dominating function $\mu_{A}$ for an operator $A\in\mathcal{B}(\ell^{2}G).$ We use these dominating functions to define some seminorms on $\mathbb{C} G.$
\begin{Def} \cite{Engel}
For all $A\in\mathcal{B}(\ell^{2}G)$ define $\mu_{A}:\mathbb R_{>0}\rightarrow\mathbb R_{\geq 0}$ as
$$\mu_{A}(R):=\inf\{C\in\mathbb R_{>0}\ :\ \lVert Au\rVert_{G\setminus B_{R}(\operatorname{Supp}\ u)}\leq C\cdot \lVert u\rVert\text{, for all }u\in\ell^2 G\}.$$
\end{Def}

The following is a triangular inequality result we use at several places in our paper.

\begin{Lemma}
For all $A,B\in\mathcal{B}(\ell^2 G)$ and $R\in\mathbb R_{>0}$ we have $\mu_{A+B}(R)\leq\mu_{A}(R)+\mu_{B}(R).$
\end{Lemma}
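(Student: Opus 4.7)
The statement is a routine triangle-inequality result, and the plan is to argue directly from the definition of $\mu_A(R)$ via an $\varepsilon$-approximation at the infimum together with Minkowski's inequality in $\ell^2 G$ restricted to a subset of $G$.

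The plan is as follows. Fix $A,B\in\mathcal{B}(\ell^2 G)$ and $R\in\mathbb R_{>0}$, and let $\varepsilon>0$. First I would invoke the definition of infimum to choose constants $C_A,C_B\in\mathbb R_{>0}$ satisfying
$$C_A<\mu_A(R)+\tfrac{\varepsilon}{2},\qquad C_B<\mu_B(R)+\tfrac{\varepsilon}{2},$$
such that for every $u\in\ell^2 G$ one has $\lVert Au\rVert_{G\setminus B_R(\operatorname{Supp}u)}\le C_A\lVert u\rVert$ and $\lVert Bu\rVert_{G\setminus B_R(\operatorname{Supp}u)}\le C_B\lVert u\rVert$.

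Next I would observe that for any subset $S\subseteq G$ the map $v\mapsto \lVert v\rVert_S=\bigl(\sum_{g\in S}|v(g)|^2\bigr)^{1/2}$ is a seminorm on $\ell^2 G$ (it is the $\ell^2$-norm of the restriction of $v$ to $S$), so in particular Minkowski's inequality gives
$$\lVert (A+B)u\rVert_S\le \lVert Au\rVert_S+\lVert Bu\rVert_S$$
for every $u\in\ell^2 G$. Applying this with $S=G\setminus B_R(\operatorname{Supp}u)$ and combining with the bounds from the previous step yields
$$\lVert (A+B)u\rVert_{G\setminus B_R(\operatorname{Supp}u)}\le (C_A+C_B)\lVert u\rVert<\bigl(\mu_A(R)+\mu_B(R)+\varepsilon\bigr)\lVert u\rVert.$$

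Thus $C_A+C_B$ belongs to the set over which the infimum defining $\mu_{A+B}(R)$ is taken, giving
$$\mu_{A+B}(R)\le C_A+C_B<\mu_A(R)+\mu_B(R)+\varepsilon.$$
Letting $\varepsilon\downarrow 0$ proves the claim. The argument has essentially no obstacle; the only point requiring a little care is the $\varepsilon$-bookkeeping (so as not to assume the infimum defining $\mu_A(R)$ is attained), and the brief observation that the restricted $\ell^2$-quantity $\lVert\cdot\rVert_S$ really does satisfy the triangle inequality, which is immediate since it is the $\ell^2$-norm of the restriction.
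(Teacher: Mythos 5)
Your proposal is correct and follows essentially the same route as the paper: apply the triangle inequality for the restricted $\ell^2$-quantity $\lVert\cdot\rVert_{G\setminus B_R(\operatorname{Supp}u)}$ and then bound each term by the corresponding dominating function. The only cosmetic difference is that the paper uses the bound $\lVert Au\rVert_{G\setminus B_R(\operatorname{Supp}u)}\le\mu_A(R)\lVert u\rVert$ directly (valid since the set of admissible constants is a closed half-line, so its infimum is itself admissible), whereas you route the same estimate through an $\varepsilon$-approximation of the infimum.
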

\begin{proof}
For all $R\in\mathbb R_{>0}\text{ and }u\in\ell^2 G,$ we have 
\begin{align*}
\lVert (A+B)u\rVert_{G\setminus B_R(Supp\ u)}	&\leq \lVert Au\rVert_{G\setminus B_R(Supp\ u)}+\lVert Bu\rVert_{G\setminus B_R(Supp\ u)}\\
						&\leq\mu_A(R)\cdot \lVert u\rVert + \mu_B(R)\cdot \lVert u\rVert\\
						&=(\mu_A(R)+\mu_B(R))\cdot \lVert u\rVert\ .
\end{align*}
Thus, we get $\mu_{A+B}(R)\leq\mu_{A}(R)+\mu_{B}(R)$ for all $R\in\mathbb R_{>0}.$
\end{proof}

In the following, we estimate the dominating function with the operator norm.

\begin{Lemma}
For all $A\in\mathcal{B}(\ell^2 G)$, we have $\mu_{A}(R)\leq\lVert A\rVert_{op}$ for all $R\in\mathbb R_{>0}.$
\end{Lemma}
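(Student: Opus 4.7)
The plan is to observe that the inequality follows directly from the definition of $\mu_A(R)$ as an infimum, once we verify that the operator norm $\lVert A\rVert_{op}$ is an admissible candidate for the constant $C$ in the defining set.

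First, I would fix an arbitrary $u\in\ell^2 G$ and an arbitrary $R\in\mathbb R_{>0}$. The key observation is the monotonicity of the partial norm: restricting the $\ell^2$-sum to a subset can only decrease it. Concretely, since $G\setminus B_R(\operatorname{Supp} u)\subseteq G$, we have
$$\lVert Au\rVert_{G\setminus B_R(\operatorname{Supp} u)}\leq \lVert Au\rVert.$$
Combined with the elementary operator-norm bound $\lVert Au\rVert\leq \lVert A\rVert_{op}\cdot \lVert u\rVert$, this gives
$$\lVert Au\rVert_{G\setminus B_R(\operatorname{Supp} u)}\leq \lVert A\rVert_{op}\cdot \lVert u\rVert.$$

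Since this holds for every $u\in\ell^2 G$, the number $\lVert A\rVert_{op}$ lies in the set whose infimum defines $\mu_A(R)$, and hence $\mu_A(R)\leq \lVert A\rVert_{op}$. This being valid for every $R\in\mathbb R_{>0}$ completes the argument. There is essentially no obstacle here; the statement is a direct unpacking of definitions, and the only minor subtlety is the (trivial) remark that $\lVert A\rVert_{op}\in\mathbb R_{>0}$ unless $A=0$, in which case both sides are zero and the inequality holds vacuously.
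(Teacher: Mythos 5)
Your proof is correct and is essentially identical to the paper's: both bound $\lVert Au\rVert_{G\setminus B_R(\operatorname{Supp} u)}\leq\lVert Au\rVert\leq\lVert A\rVert_{op}\cdot\lVert u\rVert$ and then invoke the definition of $\mu_A(R)$ as an infimum. Your closing remark about the degenerate case $A=0$ is a harmless extra precaution the paper omits.
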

\begin{proof}
For all $A\in\mathcal{B}(\ell^2 G),\ R\in\mathbb R_{>0},\ u\in\ell^2 G,$ we have
$$\lVert Au\rVert_{G\setminus B_R(Supp\ u)}\leq \lVert Au\rVert\leq\lVert A\rVert_{op}\cdot \lVert u\rVert.$$
Thus, we get $\mu_A(R)\leq\lVert A\rVert_{op}.$
\end{proof}

In the following, we use the previous estimate to show that convergence in the operator norm implies point-wise convergence in the dominating functions.
We use this result in the proof of the smoothness of the subalgebras $C^{pol}_{h} G$ of $C^{*}_{r}G$ for $h\in G^{pol}.$

\begin{Lemma}\label{point}
Let $\{A_{n}\}_{n=1}^{\infty}$ be a sequence of operators in $\mathcal{B}(\ell^2 G)$ converging (in $\lVert.\rVert_{op}$ norm) to $A\in\mathcal{B}(\ell^2 G).$
Then $\{\mu_{A_{n}}\}_{n=1}^{\infty}$ converges to $\mu_{A}$ point-wise.
\end{Lemma}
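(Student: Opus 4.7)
The plan is to derive the pointwise convergence directly from the two estimates just proved: the subadditivity $\mu_{A+B}(R)\leq\mu_A(R)+\mu_B(R)$ and the operator norm domination $\mu_A(R)\leq\|A\|_{op}$. Fix $R\in\mathbb R_{>0}$. I will show that $|\mu_{A_n}(R)-\mu_A(R)|\leq\|A_n-A\|_{op}$, from which the conclusion is immediate since $\|A_n-A\|_{op}\to 0$ by hypothesis.

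First I would write $A=A_n+(A-A_n)$ and apply the subadditivity lemma to get
\[
\mu_A(R)\leq\mu_{A_n}(R)+\mu_{A-A_n}(R),
\]
which rearranges to $\mu_A(R)-\mu_{A_n}(R)\leq\mu_{A-A_n}(R)$. Symmetrically, writing $A_n=A+(A_n-A)$ and applying subadditivity again yields
\[
\mu_{A_n}(R)-\mu_A(R)\leq\mu_{A_n-A}(R).
\]
Since both $\mu_{A-A_n}(R)$ and $\mu_{A_n-A}(R)$ are bounded above by $\|A_n-A\|_{op}$ via the previous lemma, combining the two inequalities gives
\[
|\mu_{A_n}(R)-\mu_A(R)|\leq\|A_n-A\|_{op}.
\]

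Finally, letting $n\to\infty$ and using $\|A_n-A\|_{op}\to 0$ shows $\mu_{A_n}(R)\to\mu_A(R)$. Since $R\in\mathbb R_{>0}$ was arbitrary, pointwise convergence follows. There is no real obstacle here: the proof is a direct two-step application of the two preceding lemmas, with the only subtlety being to apply subadditivity in both directions in order to pass from one-sided bounds to the two-sided estimate controlling $|\mu_{A_n}(R)-\mu_A(R)|$.
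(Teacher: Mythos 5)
Your proof is correct and is essentially identical to the paper's: both decompose $A$ and $A_n$ as perturbations of each other, apply the subadditivity lemma in each direction, and bound $\mu_{A_n-A}(R)$ by $\lVert A_n-A\rVert_{op}$ to get the two-sided estimate $|\mu_{A_n}(R)-\mu_A(R)|\leq\lVert A_n-A\rVert_{op}$.
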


\begin{proof}
Given $R \in\mathbb R_{>0}$, we have
\begin{align*}
\mu_{A_n}(R)	&=\mu_{A+(A_n-A)}(R)\\
							&\leq\mu_{A}(R)+\mu_{A_n-A}(R)\\
							&\leq\mu_{A}(R)+\lVert A_n-A\rVert_{op}\ .
\end{align*}
Similarly, we get $\mu_{A}(R)\leq\mu_{A_n}(R)+\lVert A-A_n\rVert_{op}.$
Thus, we have
$$\mu_{A}(R)-\lVert A-A_n\rVert_{op}\leq\mu_{A_n}(R)\leq\mu_{A}(R)+\lVert A_n-A\rVert_{op}.$$
Hence, we get $\lim_{n\to\infty} \mu_{A_n}(R)=\mu_{A}(R),\ \forall R\in\mathbb R_{>0}.$
\end{proof}

\begin{Rem}
Actually we have uniform convergence of $\{\mu_{A_{n}}\}_{n=1}^{\infty}$ to $\mu_{A}.$ However, point-wise convergence is enough for our purposes.
\end{Rem}

In the following, we are defining the seminorms we use to build the smooth dense subalgebras $C^{pol}_{h} G$ of $C^{*}_{r}G$ for $h\in G^{pol}.$

\begin{Def}
Recall that given $h\in G^{pol}$ $\exists C_{h}\in\mathbb R_{> 0}$ and $d_{h}\in\mathbb{N}$ such that $\forall l\in\mathbb{N}$ we have 
$n_{h,l}\leq C_{h}\cdot l^{d_{h}}.$ Let $b_{h}$ be a natural number greater than or equal to $\frac{d_{h}}{2}+2.$ 
Now define \footnote{We use a notation different than in \cite{Engel}.}
$$\lVert A \rVert_{\mu,h}:=\inf\{D\in\mathbb R_{> 0}\ |\ \mu_{A}(R)\leq D\cdot R^{-b_{h}}\ \forall R\in\mathbb R_{> 0}\}.$$
\end{Def}

\begin{Lemma}
$\lVert . \rVert_{\mu,h}$ is a seminorm on $\mathbb{C}G.$
\end{Lemma}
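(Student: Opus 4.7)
The plan is to verify the three standard seminorm axioms---non-negativity, absolute homogeneity, and the triangle inequality---after first checking that $\lVert A\rVert_{\mu,h}$ is finite for every $A\in\mathbb{C}G$, so that the infimum in the definition is taken over a nonempty set and the quantity is a well-defined real number.

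For finiteness, I would use that any $A\in\mathbb{C}G$ has finite support; let $L$ be the maximum word length of the group elements appearing in $A$. Then for every $u\in\ell^2 G$ the element $Au$ is supported inside $B_L(\operatorname{Supp} u)$, so $\lVert Au\rVert_{G\setminus B_R(\operatorname{Supp} u)}=0$ for every $R>L$, which forces $\mu_A(R)=0$ there. Combined with the earlier estimate $\mu_A(R)\leq\lVert A\rVert_{op}$ valid for all $R>0$, the choice $D=\lVert A\rVert_{op}\cdot L^{b_h}$ satisfies $\mu_A(R)\leq D\cdot R^{-b_h}$ for every $R>0$: on $(0,L]$ one has $D\cdot R^{-b_h}\geq\lVert A\rVert_{op}\geq\mu_A(R)$, and on $(L,\infty)$ the inequality is trivial since the left-hand side vanishes. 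Hence $\lVert A\rVert_{\mu,h}<\infty$.

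The three axioms are then routine. Non-negativity is immediate from the definition, with $\lVert 0\rVert_{\mu,h}=0$ because $\mu_0\equiv 0$ makes every $D>0$ admissible. Absolute homogeneity follows from the elementary identity $\mu_{\lambda A}(R)=|\lambda|\mu_A(R)$ for $\lambda\in\mathbb{C}$, obtained by pulling $|\lambda|$ outside the condition defining the dominating function (the case $\lambda=0$ being trivial). For the triangle inequality, the key input is the previously established subadditivity $\mu_{A+B}(R)\leq\mu_A(R)+\mu_B(R)$: given $D_1>\lVert A\rVert_{\mu,h}$ and $D_2>\lVert B\rVert_{\mu,h}$ realizing $\mu_A(R)\leq D_1 R^{-b_h}$ and $\mu_B(R)\leq D_2 R^{-b_h}$ respectively, one obtains $\mu_{A+B}(R)\leq(D_1+D_2)R^{-b_h}$, and passing to the infima over such $D_1,D_2$ yields $\lVert A+B\rVert_{\mu,h}\leq\lVert A\rVert_{\mu,h}+\lVert B\rVert_{\mu,h}$. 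The only mildly subtle step in the whole argument is the finiteness check; after that, the axioms are pure bookkeeping on top of the two dominating-function lemmas already recorded, so I anticipate no genuine obstacle.
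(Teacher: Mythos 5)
Your proof is correct and follows essentially the same route as the paper: the triangle inequality is deduced from the subadditivity lemma $\mu_{A+B}(R)\leq\mu_A(R)+\mu_B(R)$ in exactly the same way. You additionally verify finiteness of $\lVert A\rVert_{\mu,h}$ on $\mathbb{C}G$ (via finite propagation and the bound $\mu_A(R)\leq\lVert A\rVert_{op}$) and absolute homogeneity, both of which the paper leaves implicit; these are correct and harmless additions rather than a different argument.
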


\begin{proof}
For all $A\in\mathbb{C}G,$ we have obviously $\lVert A\rVert_{\mu,h}\geq 0.$

Now for all $A,B\in\mathbb{C}G$ and $R>0$ we have
\begin{align*}
	\mu_{A+B}(R)	&\leq\mu_{A}(R)+\mu_{B}(R) \\
			&\leq\lVert A\rVert_{\mu,h}R^{-b_{h}}+\lVert B\rVert_{\mu,h}R^{-b_{h}}\\
			&=(\lVert A\rVert_{\mu,h}+\lVert B\rVert_{\mu,h})R^{-b_{h}}.
\end{align*}
Therefore $\lVert A+B\rVert_{\mu,h}\leq \lVert A\rVert_{\mu,h}+\lVert B\rVert_{\mu,h}.$
Hence, $\lVert . \rVert_{\mu,h}$ is a seminorm on $\mathbb{C}G.$
\end{proof}

In the following, we define our main gadget (a smooth dense subalgebra of $C^{*}_{r}G$).
We use the properties of the seminorm to lift the trace function $\tau_h$ (originally on $\mathbb{C}G$) to this subalgebra of $C^{*}_{r}G.$

\begin{Def}
For each $h\in G^{pol},$ we define $C^{pol}_{h}G$ as the completion of $\mathbb{C}G$ with respect to the norm $\lVert . \rVert_{op}$ and the seminorm $\lVert . \rVert_{\mu,h}.$
\end{Def}

Since $C^{pol}_{h}G$ contains $\mathbb{C}G$, it is dense (in the operator norm) in $C^{*}_{r}G.$

In the following, we show that $C^{pol}_{h}G$ is an algebra over the complex numbers. The only nontrivial part is to show that it is closed under multiplication.

\begin{Lemma}
$C^{pol}_{h}G$ is an algebra over $\mathbb{C}$.
\end{Lemma}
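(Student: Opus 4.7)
The vector space axioms transfer to the completion without effort: the operator norm is a norm, $\lVert\cdot\rVert_{\mu,h}$ is a seminorm, and scalar multiplication is continuous in both. As the author notes, the only content of the lemma is closure under multiplication, so my plan is to prove a Leibniz-type estimate
$$\lVert AB\rVert_{\mu,h}\leq 2^{b_h}\bigl(\lVert A\rVert_{\mu,h}\cdot\lVert B\rVert_{op}+\lVert A\rVert_{op}\cdot\lVert B\rVert_{\mu,h}\bigr)$$
on $\mathbb{C}G$. Combined with the standard submultiplicativity of the operator norm, this will let one extend multiplication to the joint completion by a Cauchy-sequence argument.

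The heart of the proof is the operator-level inequality
$$\mu_{AB}(R)\leq \mu_{A}(R/2)\cdot\lVert B\rVert_{op}+\lVert A\rVert_{op}\cdot\mu_{B}(R/2)$$
for all $A,B\in\mathcal{B}(\ell^{2}G)$ and $R>0$. Given $u\in\ell^{2}G$, I would split $Bu=v_{1}+v_{2}$, where $v_{1}$ is the restriction of $Bu$ to $B_{R/2}(\operatorname{Supp} u)$ and $v_{2}$ is its restriction to the complement. Since $\operatorname{Supp} v_{1}\subseteq B_{R/2}(\operatorname{Supp} u)$, one has $B_{R/2}(\operatorname{Supp} v_{1})\subseteq B_{R}(\operatorname{Supp} u)$, so
$$\lVert Av_{1}\rVert_{G\setminus B_{R}(\operatorname{Supp} u)}\leq\mu_{A}(R/2)\lVert v_{1}\rVert\leq\mu_{A}(R/2)\lVert B\rVert_{op}\lVert u\rVert.$$
For the far part, the trivial estimate $\lVert Av_{2}\rVert\leq\lVert A\rVert_{op}\lVert v_{2}\rVert$ together with the identity $\lVert v_{2}\rVert=\lVert Bu\rVert_{G\setminus B_{R/2}(\operatorname{Supp} u)}\leq\mu_{B}(R/2)\lVert u\rVert$ supplies the second term. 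Substituting the defining bounds $\mu_{A}(R/2)\leq 2^{b_h}\lVert A\rVert_{\mu,h}R^{-b_h}$ and the analogous one for $B$ then yields the Leibniz estimate.

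Given the Leibniz inequality, closure under multiplication follows by telescoping. If $(A_n),(B_n)\subseteq\mathbb{C}G$ are Cauchy in both $\lVert\cdot\rVert_{op}$ and $\lVert\cdot\rVert_{\mu,h}$, then in particular both sequences are bounded in operator norm, and the identity
$$A_{n}B_{n}-A_{m}B_{m}=(A_{n}-A_{m})B_{n}+A_{m}(B_{n}-B_{m})$$
combined with submultiplicativity of $\lVert\cdot\rVert_{op}$ and the Leibniz estimate shows that $(A_{n}B_{n})$ is Cauchy in both seminorms. Thus multiplication extends continuously to $C^{pol}_{h}G$, and the usual argument (associativity, distributivity, etc. pass to the completion by continuity) yields an algebra structure over $\mathbb{C}$. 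The main obstacle is purely technical: the choice of splitting radius $R/2$ and the careful tracking of the factor $2^{b_h}$ through the rescaling from $R/2$ to $R$. Once the Leibniz estimate is in hand, the remaining steps are standard bookkeeping.
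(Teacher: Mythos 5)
Your proposal is correct, and its core estimate is proved by a route genuinely different from the paper's. The paper imports the inequality $\mu_{C B}(R)\leq 2\lVert C\rVert_{op}\mu_{B}(R/2)+\lVert B\rVert_{op}\mu_{C}(R/2)+2\mu_{C}(R/2)\mu_{B}(R/2)$ from Roe as a black box; the cross term $2\mu_{C}(R/2)\mu_{B}(R/2)$ produces an $R^{-2b_h}$ contribution, which is why the paper must split into the cases $R\geq 1$ and $0<R<1$ (handling the latter by the crude bound $\mu_{CB}(R)\leq\lVert C\rVert_{op}\lVert B\rVert_{op}\leq\lVert C\rVert_{op}\lVert B\rVert_{op}R^{-b_h}$) and then take a maximum. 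Your near/far decomposition $Bu=v_1+v_2$ along $B_{R/2}(\operatorname{Supp}u)$ yields the sharper, cross-term-free inequality $\mu_{AB}(R)\leq\mu_{A}(R/2)\lVert B\rVert_{op}+\lVert A\rVert_{op}\mu_{B}(R/2)$ directly (the key inclusion $B_{R/2}(\operatorname{Supp}v_1)\subseteq B_{R}(\operatorname{Supp}u)$ and the bound $\lVert v_2\rVert\leq\mu_B(R/2)\lVert u\rVert$ are both valid), so the Leibniz estimate $\lVert AB\rVert_{\mu,h}\leq 2^{b_h}(\lVert A\rVert_{\mu,h}\lVert B\rVert_{op}+\lVert A\rVert_{op}\lVert B\rVert_{\mu,h})$ holds for all $R>0$ with no case analysis. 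What your approach buys is a self-contained, uniform submultiplicativity-type bound that makes the closure argument a one-line telescoping computation; what the paper's approach buys is only the convenience of citing an existing reference. Two small bookkeeping remarks: in the telescoping step you need the sequences bounded in $\lVert\cdot\rVert_{\mu,h}$ as well as in $\lVert\cdot\rVert_{op}$ (both follow from Cauchyness, so this is harmless), and your argument constructs the product as a limit of $(A_nB_n)$ whereas the paper takes $AB$ as already defined inside $C^{*}_{r}G$ and verifies $A_nB_n\to AB$; these agree because the completion sits inside $C^{*}_{r}G$ via the operator norm.
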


\begin{proof}
Given $A,B \in C^{pol}_{h}G,$ there exist sequences $\{A_n\}_{n=1}^{\infty}$ and $\{B_n\}_{n=1}^{\infty}$ in $\mathbb{C}G$
converging (in both norms) to $A$ and $B$ respectively.

The only nontrivial part is to show that $\lim_{n\to\infty}\lVert AB-A_nB_n\rVert_{\mu,h}=0.$
We have
\begin{align*}
\lVert AB-A_nB_n\rVert_{\mu,h}	&=\lVert (AB-A_nB)+(A_nB-A_nB_n)\rVert_{\mu,h}\\
				&=\lVert (A-A_n)B+A_n(B-B_n)\rVert_{\mu,h}\\
				&\leq\lVert (A-A_n)B\rVert_{\mu,h}+\lVert A_n(B-B_n)\rVert_{\mu,h}\ .
\end{align*}

We only show $\lim_{n\to\infty}\lVert (A-A_n)B\rVert_{\mu,h}=0:$

Let $C_n=A-A_n$ then, for all $R\in\mathbb R_{> 0},$ we have (the first inequality below is from \cite[Prop. 5.2]{Roe})
\begin{align*} 
	\mu_{C_nB}(R) 	&\leq 2\lVert C_n \rVert_{op}\mu_{B}(R/2)+\lVert B \rVert_{op}\mu_{C_n}(R/2)+2\mu_{C_n}(R/2)\mu_{B}(R/2)\\
			&\leq 2\lVert C_n \rVert_{op}\lVert B \rVert_{\mu,h}(R/2)^{-b_{h}}+\lVert B \rVert_{op}\lVert C_n \rVert_{\mu,h}(R/2)^{-b_{h}}+
			2\lVert C_n\rVert_{\mu,h}\lVert B \rVert_{\mu,h}(R/2)^{-2b_{h}}\\
			&=\{2^{b_{h}+1}\lVert C_n \rVert_{op}\lVert B \rVert_{\mu,h}+2^{b_{h}}\lVert C_n \rVert_{\mu,h}\lVert B \rVert_{op}+2^{2b_{h}+1}\lVert C_n \rVert_{\mu,h}\lVert B \rVert_{\mu,h}R^{-b_{h}}\}R^{-b_{h}}\ .
\end{align*}

Now if $R\geq 1$, then 
\begin{align*}
		&2^{b_{h}+1}\lVert C_n \rVert_{op}\lVert B \rVert_{\mu,h}+2^{b_{h}}\lVert C_n \rVert_{\mu,h}\lVert B \rVert_{op}+2^{2b_{h}+1}\lVert C_n \rVert_{\mu,h}\lVert B \rVert_{\mu,h}R^{-b_{h}}\leq \\
		&2^{b_{h}+1}\lVert C_n \rVert_{op}\lVert B \rVert_{\mu,h}+2^{b_{h}}\lVert C_n \rVert_{\mu,h}\lVert B \rVert_{op}+2^{2b_{h}+1}\lVert C_n \rVert_{\mu,h}\lVert B \rVert_{\mu,h}\ .
\end{align*}

Let $D_n=2^{b_{h}+1}\lVert C_n \rVert_{op}\lVert B \rVert_{\mu,h}+2^{b_{h}}\lVert C_n \rVert_{\mu,h}\lVert B \rVert_{op}+2^{2b_{h}+1}\lVert C_n \rVert_{\mu,h}\lVert B \rVert_{\mu,h}.$

If $0<R<1$, then $$\mu_{C_nB}(R)\leq\lVert C_nB\rVert_{op}\leq\lVert C_n\rVert_{op}\lVert B\rVert_{op}\leq\lVert C_n\rVert_{op}\lVert B\rVert_{op}\cdot R^{-b_{h}}.$$
So for all $R\in\mathbb R{>0},$ we have $\mu_{C_nB}(R)\leq \max\{D_n,\lVert C_n\rVert_{op}\lVert B\rVert_{op}\}\cdot R^{-b_{h}}.$
Hence, we get $\lVert C_nB\rVert_{\mu,h}\leq \max\{D_n,\lVert C_n\rVert_{op}\lVert B\rVert_{op}\}.$
Since we have $$\lim_{n\to\infty}D_n=\lim_{n\to\infty}\lVert C_n\rVert_{op}\lVert B\rVert_{op}=0,$$
we get $\lim_{n\to\infty}\lVert C_nB\rVert_{\mu,h}=0.$

Similarly, we can show that $\lim_{n\to\infty}\lVert A_n(B-B_n)\rVert_{\mu,h}=0.$
Thus, we get $$\lim_{n\to\infty}\lVert AB-A_nB_n\rVert_{\mu,h}=0.$$
Hence, we have $\lim_{n\to\infty}A_nB_n=AB.$
So $AB\in C^{pol}_{h}G.$
Thus, $C^{pol}_{h}G$ is an algebra.
\end{proof}

In the following, we are giving an estimate that is used in the proof of smoothness of $C^{pol}_{h}G.$
It can be proven by induction on $n.$

\begin{Lemma}\cite{Engel}
Given $A\in C^{pol}_{h}G$ and $n\in\mathbb{N},$ we have 
$$\mu_{(\operatorname{Id}-A)^{n}}(R)\leq\sum_{k=1}^{n-1} 5^k\lVert \operatorname{Id}-A\rVert_{op}^{n-1}\mu_{A}\Big( \frac{R}{2^k} \Big).$$
\end{Lemma}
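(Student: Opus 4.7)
My plan is to induct on $n \geq 2$, after first reducing everything on the right-hand side to $\mu_A$ via the observation that $\mu_{\operatorname{Id}-A} = \mu_A$. This identity holds because $\operatorname{Id}$ has zero propagation: $\operatorname{Id}(u)$ is supported on $\operatorname{Supp}\ u \subseteq B_R(\operatorname{Supp}\ u)$ for every $R > 0$, giving $\mu_{\operatorname{Id}}(R) = 0$. Combined with the triangle inequality for $\mu$ established earlier in the section and the obvious $\mu_{-A} = \mu_A$, this forces $\mu_T = \mu_A$, where I write $T := \operatorname{Id} - A$. An immediate corollary, via the norm bound $\mu_T(R) \leq \lVert T\rVert_{op}$ also proved earlier, is the coarse estimate $\mu_A(R) \leq c$ for every $R > 0$, where $c := \lVert T\rVert_{op}$; I use this repeatedly to absorb quadratic-in-$\mu$ terms.

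For the base case $n = 2$, I apply the product inequality of \cite[Prop.~5.2]{Roe} (in the form already invoked in the previous lemma) to $T \cdot T$. The three summands $2c\,\mu_A(R/2)$, $c\,\mu_A(R/2)$, and $2\mu_A(R/2)^2$ collapse, via $\mu_A(R/2) \leq c$ in the last term, to at most $5c\,\mu_A(R/2)$, which is exactly $5^1 c^1 \mu_A(R/2)$ as required.

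For the inductive step I factor $T^n = T^{n-1} \cdot T$ and apply the same product inequality, using $\lVert T^{n-1}\rVert_{op} \leq c^{n-1}$ and $\mu_A(R/2) \leq c$ to obtain
$$\mu_{T^n}(R) \leq 2c^{n-1}\mu_A(R/2) + 3c\,\mu_{T^{n-1}}(R/2).$$
Substituting the inductive hypothesis into the second summand and reindexing $k \mapsto k+1$ introduces an extra factor of $1/5$, which against the prefactor $3c$ produces coefficients $(3/5)\cdot 5^k c^{n-1}$ on each term with $k \geq 2$, dominated by $5^k c^{n-1}$. The leftover contribution $2c^{n-1}\mu_A(R/2)$ is absorbed by the $k = 1$ term of the target sum, whose coefficient is $5c^{n-1} \geq 2c^{n-1}$.

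The main (and really only) obstacle is the constant bookkeeping in that telescoping: verifying that the combination $2 + 3 = 5$ produced by the product rule fits exactly under the geometric factor $5^k$ in the statement is what makes the induction close. No input beyond the three earlier lemmas of this section (triangle inequality for $\mu$, operator-norm bound on $\mu$, and Roe's product estimate) is needed.
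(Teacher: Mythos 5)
Your proof is correct and follows exactly the route the paper indicates but does not write out (the paper only says ``induction on $n$'' and defers to \cite{Engel}): induction via Roe's product estimate, with the quadratic term $2\mu\cdot\mu$ absorbed using $\mu_{A}(R)=\mu_{\operatorname{Id}-A}(R)\leq\lVert\operatorname{Id}-A\rVert_{op}$, and the constant bookkeeping ($2+3=5$, reindexing costing a factor $3/5\leq 1$) checks out. The only caveat is that the inequality as literally stated fails for $n=1$ (the right-hand side is an empty sum while $\mu_{\operatorname{Id}-A}=\mu_{A}$ need not vanish), so your implicit restriction to $n\geq 2$ is both necessary and consistent with how the lemma is used.
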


In the following, we show that $C^{pol}_{h}G$ is a smooth subalgebra of $C^{*}_{r} G.$

\begin{Lemma}\cite{Engel}
$C^{pol}_{h}G$ is closed under holomorphic functional calculus.
\end{Lemma}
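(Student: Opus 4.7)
The plan is to reduce closure under holomorphic functional calculus to spectral invariance (inverse-closedness) and then prove inverse-closedness via a Neumann series argument powered by the preceding estimate on $\mu_{(\operatorname{Id}-A)^n}$. First, I would invoke the standard principle that a dense Fr\'echet subalgebra $\mathcal{A}$ of a Banach algebra $B$ is closed under holomorphic functional calculus if and only if it is spectrally invariant, i.e., every $a\in\mathcal{A}$ invertible in $B$ satisfies $a^{-1}\in\mathcal{A}$. This reduces the theorem to proving that $C^{pol}_h G$ is inverse-closed in $C^*_r G$.

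Next, I would establish the following local inverse-closedness statement: if $T\in C^{pol}_h G$ satisfies $\lVert T\rVert_{op}<\frac{1}{5\cdot 2^{b_h}}$, then $(\operatorname{Id}-T)^{-1}\in C^{pol}_h G$. The Neumann series $(\operatorname{Id}-T)^{-1}=\sum_{n=0}^{\infty}T^n$ converges in operator norm. To see that its partial sums are also Cauchy in $\lVert\cdot\rVert_{\mu,h}$, apply the preceding lemma with $A=\operatorname{Id}-T$; since $\mu_{\operatorname{Id}}(R)=0$ for $R>0$, the triangle inequality for the dominating function gives $\mu_{\operatorname{Id}-T}(R)=\mu_T(R)$, so the lemma specializes to
\[
\mu_{T^n}(R)\leq\sum_{k=1}^{n-1}5^k\lVert T\rVert_{op}^{n-1}\mu_T(R/2^k).
\]
Using $\mu_T(R/2^k)\leq\lVert T\rVert_{\mu,h}\cdot 2^{kb_h}R^{-b_h}$ and summing geometrically, one obtains
\[
\lVert T^n\rVert_{\mu,h}\leq\frac{(5\cdot 2^{b_h})^n}{5\cdot 2^{b_h}-1}\lVert T\rVert_{op}^{n-1}\lVert T\rVert_{\mu,h},
\]
which is summable in $n$ exactly when $\lVert T\rVert_{op}<\frac{1}{5\cdot 2^{b_h}}$. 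Completeness of $C^{pol}_h G$ with respect to both $\lVert\cdot\rVert_{op}$ and $\lVert\cdot\rVert_{\mu,h}$ then places $(\operatorname{Id}-T)^{-1}$ inside $C^{pol}_h G$.

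Finally, I would promote this local fact to full inverse-closedness by a density argument. Given $A\in C^{pol}_h G$ with $A^{-1}\in C^*_r G$, density of $C^{pol}_h G$ in $C^*_r G$ lets me choose $B\in C^{pol}_h G$ with $\lVert A^{-1}-B\rVert_{op}$ so small that $\lVert\operatorname{Id}-AB\rVert_{op}<\frac{1}{5\cdot 2^{b_h}}$. Since $AB\in C^{pol}_h G$, the local step yields $(AB)^{-1}\in C^{pol}_h G$, whence $A^{-1}=B(AB)^{-1}\in C^{pol}_h G$ using that $C^{pol}_h G$ is an algebra.

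The main obstacle is controlling the seminorm of the iterates $T^n$: the factor $(5\cdot 2^{b_h})^n$ that accumulates from iterating the product estimate must be dominated by the operator-norm decay $\lVert T\rVert_{op}^{n-1}$, which forces the smallness threshold $\lVert T\rVert_{op}<\frac{1}{5\cdot 2^{b_h}}$ rather than the more permissive $\lVert T\rVert_{op}<1$ one would hope for. The density step then makes up for this restrictive threshold, since one may bring $AB$ arbitrarily close to $\operatorname{Id}$ in operator norm while keeping $B\in C^{pol}_h G$.
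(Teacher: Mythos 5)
Your proposal is correct and takes essentially the same route as the paper: the heart of both arguments is the Neumann-series estimate for $\lVert\cdot\rVert_{\mu,h}$ obtained from the preceding lemma bounding $\mu_{(\operatorname{Id}-A)^{n}}$, under the same smallness threshold $\lVert T\rVert_{op}<\frac{1}{5\cdot 2^{b_{h}}}$ (the paper uses half of this value). The only difference is presentational: you spell out the promotion from invertibility near the identity to full spectral invariance (the $A^{-1}=B(AB)^{-1}$ density trick) and the passage to closure under holomorphic functional calculus, whereas the paper delegates exactly these two steps to the cited lemmas of Schweitzer and Figueroa--Gracia-Bond\'ia--V\'arilly.
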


\begin{proof}

Given $A\in C^{pol}_{h}G$ with $\lVert \operatorname{Id}-A \rVert_{op}<\epsilon$, where $\epsilon=\frac{1}{2}\frac{1}{5\cdot 2^{b_{h}}}.$
We have

\begin{align*}
	\mu_{A^{-1}-\sum_{n=0}^{N} (\operatorname{Id}-A)^{n}}(R)	&\leq\sum_{n=N+1}^{\infty} \mu_{(\operatorname{Id}-A)^{n}}(R)\\
							&\leq\sum_{n=N+1}^{\infty} \sum_{k=1}^{n-1} 5^k\epsilon^{n-1}\mu_{A}\Big( \frac{R}{2^k} \Big)\\
							&=\sum_{k=1}^{N} \Big\{5^k\mu_{A}\Big( \frac{R}{2^k} \Big)\cdot\sum_{n=N+1}^{\infty}\epsilon^{n-1}\Big\}
							+\sum_{k=N+1}^{\infty} \Big\{5^k\mu_{A}\Big( \frac{R}{2^k} \Big)\cdot\sum_{n=k+1}^{\infty}\epsilon^{n-1}\Big\}\\
							&=\frac{\epsilon^N}{1-\epsilon}\sum_{k=1}^{N} 5^k\mu_{A}\Big( \frac{R}{2^k} \Big)
							+\frac{1}{1-\epsilon}\sum_{k=N+1}^{\infty} (5\epsilon)^k\mu_{A}\Big( \frac{R}{2^k} \Big)\\
							&\leq\frac{\epsilon^N\cdot \lVert A\rVert_{\mu,h}}{1-\epsilon}\cdot R^{-b_{h}}\cdot\sum_{k=1}^{N}(5\cdot 2^{b_h})^k
							+\frac{\lVert A\rVert_{\mu,h}}{1-\epsilon}\cdot R^{-b_{h}}\cdot\sum_{k=N+1}^{\infty}(5\cdot\epsilon\cdot 2^{b_h})^k\\
							&=\frac{\lVert A\rVert_{\mu,h}}{1-\epsilon}\cdot\bigg\{\epsilon^N\sum_{k=1}^{N}(5\cdot 2^{b_h})^k+\sum_{k=N+1}^{\infty}(\frac{1}{2})^k\bigg\}\cdot R^{-b_{h}}\ .
\end{align*}
Thus, we have $\lVert A^{-1}-\sum_{n=0}^{N} (\operatorname{Id}-A)^{n}\rVert_{\mu,h}\leq\frac{\lVert A\rVert_{\mu,h}}{1-\epsilon}\cdot\{\epsilon^N\sum_{k=1}^{N}(5\cdot 2^{b_h})^k+\sum_{k=N+1}^{\infty}(\frac{1}{2})^k\}.$
Now since $\lim_{N\to\infty}\{\epsilon^N\sum_{k=1}^{N}(5\cdot 2^{b_h})^k+\sum_{k=N+1}^{\infty}(\frac{1}{2})^k\}=0,$
we get $$\lim_{N\to\infty}\lVert A^{-1}-\sum_{n=0}^{N} (\operatorname{Id}-A)^{n}\rVert_{\mu,h}=0.$$

Hence we have $A^{-1}\in C^{pol}_{h}G.$
So $C^{pol}_{h}G$ is closed under holomorphic functional calculus by \cite[Lemma 1.2]{Sch} and \cite[Lemma 3.38]{Fgvb}.
\end{proof}




\subsection{Trace Functions}

In this section, we recall the trace function $\tau_h$ on $\mathbb{C}G$ corresponding to an element $h\in G.$
If $h\in G^{pol},$ then we extend this trace to a trace $\widetilde{\tau}_{h}$ on $C^{pol}_{h}G.$ 

In the following, we are recalling the classical trace on $\mathbb{C}G$ corresponding to an element $h\in G.$
\begin{Def}
For all $h\in G,$ let $\tau_{h} : \mathbb{C}G\rightarrow\mathbb{C}$ be defined as:
$$\tau_{h}(\sum_{g\in G} a_{g}.g) := \sum_{g\in C(h)} a_{g},$$
where $C(h)$ is the conjugacy class of $h.$
\end{Def}

It is clear that $\tau_{h}$ is a trace function on $\mathbb{C}G$ \cite{St}.

In the following, we introduce a notation so that, we can use operators as matrices.

\begin{Def}
Given $A\in\mathcal{B}(\ell^2 G)$, define $A(g,f):=(A \delta_f)(g)$ for all $g,f\in G,$
where 
\[\delta_f(k)= \begin{cases} 
      1 & \text{if}\ k=f \\
      0 & \text{otherwise}\ .\\ 
   \end{cases}
\]
\end{Def}

The following equivariance property is used in the proof that liftings 
$$\widetilde{\tau}_h :C^{pol}_{h} G\rightarrow \mathbb{C}$$
are trace functions. It can be shown with a direct calculation.
\begin{Lemma}
 Given $A\in C^{*}_{r}G$ and $g,f,h\in G$, we have $A(g,f)=A(gh,fh).$
\end{Lemma}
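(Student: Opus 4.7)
The plan is to pass from $C^{*}_{r}G$ to the dense subalgebra $\mathbb{C}G$ by a continuity argument, and then verify the identity on $\mathbb{C}G$ by a direct calculation using the formula for the left regular representation.

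First I would observe that evaluating an operator at a pair of basis elements is continuous in the operator norm: for any $A\in\mathcal{B}(\ell^{2}G)$ one has $A(g,f)=\langle A\delta_{f},\delta_{g}\rangle$, so $|A(g,f)|\leq\lVert A\rVert_{op}$. Therefore both sides of the desired identity $A(g,f)=A(gh,fh)$ depend continuously on $A\in C^{*}_{r}G$ with respect to the operator norm. Since $\mathbb{C}G$ is dense in $C^{*}_{r}G$, it suffices to establish the identity for $A\in\mathbb{C}G$ and pass to the limit.

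Second I would verify the identity for a finite sum $A=\sum_{k\in G} a_{k}k\in\mathbb{C}G$. A direct computation gives $A\delta_{f}=\sum_{k}a_{k}\delta_{kf}$, so that $A(g,f)=a_{gf^{-1}}$. Then
\[
A(gh,fh)=a_{(gh)(fh)^{-1}}=a_{ghh^{-1}f^{-1}}=a_{gf^{-1}}=A(g,f),
\]
as required.

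Conceptually the statement is just the familiar fact that operators in $C^{*}_{r}G$ (acting by left multiplication on $\ell^{2}G$) commute with right translations by elements of $G$; the matrix-coefficient identity is precisely the entry-wise form of this commutation. I do not expect any substantive obstacle here. The only mildly delicate point is ensuring the continuity estimate is applied correctly when extending from $\mathbb{C}G$ to all of $C^{*}_{r}G$, but this is handled uniformly by the bound $|A(g,f)|\leq\lVert A\rVert_{op}$.
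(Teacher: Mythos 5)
Your proof is correct; the paper omits the argument entirely (it merely remarks that the lemma ``can be shown with a direct calculation''), and your computation $A(g,f)=a_{gf^{-1}}$ on $\mathbb{C}G$ together with the norm-continuity of $A\mapsto\langle A\delta_f,\delta_g\rangle$ and density of $\mathbb{C}G$ in $C^{*}_{r}G$ is exactly the standard way to carry that calculation out. As you note, one could equivalently observe that every $A\in C^{*}_{r}G$ commutes with the (unitary) right regular representation, which gives $A(gh,fh)=\langle A\delta_{fh},\delta_{gh}\rangle=\langle A\delta_f,\delta_g\rangle=A(g,f)$ in one line without the density step, but both routes are sound.
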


In the following, we define a lifting of the classical trace function $\tau_h:\mathbb{C}G\rightarrow\mathbb{C}.$

\begin{Def}
For each $h\in G^{pol},$ define $\widetilde{\tau}_{h} : C^{pol}_{h}G \rightarrow\mathbb{C}$ as,
$$\widetilde{\tau}_{h}(A)=\sum_{g\in C(h)} A(g,e)\ .$$

\end{Def}

In the following, we prove an inequality that we use in the proof of the Theorem ~\ref{well}.
\begin{Lemma}\label{connect}
For all $A\in\mathcal{B}(\ell^2 G)\text{ and }R\in\mathbb R_{>0}$ we have
$$\bigg(\sum_{\lVert g \rVert_{w} > R} |A(g,e)|^2\bigg)^{\frac{1}{2}}\leq\mu_{A}(R)\ .$$
\end{Lemma}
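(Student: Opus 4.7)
The plan is to apply the definition of $\mu_A$ directly to the test vector $u = \delta_e \in \ell^2 G$. First I would observe that $\delta_e$ has unit norm and satisfies $\operatorname{Supp} \delta_e = \{e\}$, so that the $R$-neighborhood of its support is exactly the closed word-length ball $B_R(\{e\}) = \{g \in G : \lVert g \rVert_w \leq R\}$, and therefore the complement $G \setminus B_R(\operatorname{Supp} \delta_e)$ coincides with $\{g \in G : \lVert g \rVert_w > R\}$.

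Second, I would unpack the quantity $\lVert A\delta_e \rVert_{G \setminus B_R(\operatorname{Supp} \delta_e)}$. Using the matrix notation $A(g,f) = (A\delta_f)(g)$ from the preceding definition, the coefficient $(A\delta_e)(g)$ is precisely $A(g,e)$. Hence
\begin{equation*}
\lVert A\delta_e \rVert_{G \setminus B_R(\operatorname{Supp} \delta_e)} = \bigg( \sum_{\lVert g \rVert_w > R} |A(g,e)|^2 \bigg)^{\frac{1}{2}}.
\end{equation*}

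Third, I would invoke the definition of $\mu_A(R)$ as an infimum: for every $\varepsilon > 0$ there exists $C \leq \mu_A(R) + \varepsilon$ such that $\lVert Au \rVert_{G \setminus B_R(\operatorname{Supp} u)} \leq C \cdot \lVert u \rVert$ holds for all $u \in \ell^2 G$. Applying this with $u = \delta_e$ (whose $\ell^2$-norm is $1$) and letting $\varepsilon \to 0$ yields the desired estimate.

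There is essentially no obstacle here; the lemma is a special case of the defining inequality for $\mu_A$ obtained by plugging in the single most convenient test vector, namely $\delta_e$. The only point worth noting is the passage from the infimum to an actual inequality, which is handled by the standard $\varepsilon$-argument above.
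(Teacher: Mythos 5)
Your proposal is correct and is essentially identical to the paper's proof: both plug the unit test vector $\delta_e$ into the defining inequality for $\mu_A(R)$, identify $G\setminus B_R(\operatorname{Supp}\delta_e)$ with $\{g:\lVert g\rVert_w>R\}$, and read off $(A\delta_e)(g)=A(g,e)$. The only cosmetic difference is that you spell out the $\varepsilon$-argument for passing from the infimum to the inequality, which the paper leaves implicit.
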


\begin{proof}
For all $A\in\mathcal{B}(\ell^2 G)\text{ and }R\in\mathbb R_{>0}$ we have
\begin{align*}
	\bigg(\sum_{\lVert g \rVert_{w} > R} |A(g,e)|^2\bigg)^{\frac{1}{2}}
		&=\bigg(\sum_{g\in G\setminus B_{R}(\{e\})} |(A \delta_e)(g)|^2\bigg)^{\frac{1}{2}}\\
		&=\lVert A \delta_{e}\rVert_{G\setminus B_{R}(\operatorname{Supp}\ \delta_{e})}\\
		&\leq \mu_{A}(R) \lVert \delta_{e}\rVert\\
		&=\mu_{A}(R).\qedhere
\end{align*}
\end{proof}
Since we defined $\widetilde{\tau}_{h}$ to be a sum over the (possibly infinite) set $C(h)$, we need to prove that the sum converges.
In the following, we show that the sum absolutely converges.
\begin{Th} \label{well}
$\widetilde{\tau}_{h} : C^{pol}_{h}G \rightarrow\mathbb{C}$ is well defined and continuous.
\end{Th}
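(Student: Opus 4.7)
The plan is to show that the sum $\sum_{g \in C(h)} A(g,e)$ converges absolutely for each $A \in C^{pol}_h G$, with a bound that is simultaneously continuous in $\|\cdot\|_{op}$ and $\|\cdot\|_{\mu,h}$, so that the formula defines a well-defined continuous linear functional on $C^{pol}_h G$.

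First, I would decompose $C(h) = \bigsqcup_{l \geq 0} C_l(h)$ according to the word length filtration, using the polynomial growth hypothesis $n_{h,l} \leq C_h \cdot l^{d_h}$ coming from $h \in G^{pol}$. For the small values $l = 0, 1$, the set $C_l(h)$ has cardinality bounded by a constant depending only on $G$ (and the chosen generating set), and each matrix coefficient satisfies $|A(g,e)| = |\langle A\delta_e, \delta_g\rangle| \leq \|A\|_{op}$, so this part of the sum contributes at most a constant multiple of $\|A\|_{op}$.

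For each shell with $l \geq 2$, I would apply the Cauchy–Schwarz inequality to obtain
\[
\sum_{g \in C_l(h)} |A(g,e)| \;\leq\; n_{h,l}^{1/2}\left(\sum_{g \in C_l(h)} |A(g,e)|^2\right)^{1/2}.
\]
Since $C_l(h) \subseteq \{g \in G : \|g\|_w > l-1\}$, Lemma~\ref{connect} bounds the $\ell^2$ factor by $\mu_A(l-1)$, and the seminorm definition gives $\mu_A(l-1) \leq \|A\|_{\mu,h} (l-1)^{-b_h}$. Combined with $n_{h,l}^{1/2} \leq \sqrt{C_h}\, l^{d_h/2}$, this yields
\[
\sum_{g \in C_l(h)} |A(g,e)| \;\leq\; \sqrt{C_h}\cdot \|A\|_{\mu,h} \cdot l^{d_h/2} (l-1)^{-b_h}.
\]

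Summing over $l \geq 2$, the series $\sum_l l^{d_h/2}(l-1)^{-b_h}$ converges because the exponent satisfies $b_h - d_h/2 \geq 2 > 1$ by the choice $b_h \geq d_h/2 + 2$; this is the arithmetical point where the definition of $b_h$ is exactly tuned to dominate the polynomial growth after Cauchy–Schwarz. Calling the resulting finite sum $K_h$, the total bound becomes
\[
|\widetilde\tau_h(A)| \;\leq\; C' \|A\|_{op} + \sqrt{C_h}\,K_h\cdot \|A\|_{\mu,h},
\]
which proves absolute convergence (well-definedness) and continuity in the topology defining $C^{pol}_h G$ in one stroke. There is no real obstacle here beyond bookkeeping; the only subtle step is noticing that the contribution of the innermost shells ($l = 0, 1$) cannot be controlled by $\|A\|_{\mu,h}$ alone and must be absorbed into the operator-norm term, which is precisely why $C^{pol}_h G$ was defined as the completion with respect to both norms.
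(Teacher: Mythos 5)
Your proof is correct and follows essentially the same route as the paper: shell decomposition of $C(h)$ by word length, Cauchy--Schwarz on each shell, Lemma~\ref{connect} together with the bound $\mu_A(R)\leq\lVert A\rVert_{\mu,h}R^{-b_h}$, and convergence from $b_h\geq d_h/2+2$. The only cosmetic difference is that the paper takes $R=l-\tfrac{1}{2}$ so the estimate applies from $l=1$ on (treating only $h=e$ separately), whereas you take $R=l-1$ and absorb the shells $l=0,1$ into an operator-norm term; both are fine.
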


\begin{proof}
Given $A\in C^{pol}_{h}G,$ we have 2 cases:

If $h=e$, then $|\widetilde{\tau}_{h}(A)|=|A(e,e)|<\infty.$
If $h\neq e$, then we have
\begin{align*}
	|\widetilde{\tau}_{h}(A)|
				&\leq\sum_{l=1}^{\infty}\sum_{g\in C_{l}(h)} |A(g,e)|	&\text{(since $h\neq e,$ $l$ starts from $1$)}\\
				&\leq\sum_{l=1}^{\infty} \sqrt{n_{h,l}}\cdot \bigg(\sum_{g\in C_{l}(h)} |A(g,e)|^{2}\bigg)^{\frac{1}{2}}	&\text{(Cauchy-Schwarz inequality)}\\
				&\leq\sum_{l=1}^{\infty} \sqrt{n_{h,l}}\cdot \bigg(\sum_{\lVert g \rVert_{w} > (l-\frac{1}{2})} |A(g,e)|^{2}\bigg)^{\frac{1}{2}}\\
				&\leq\sum_{l=1}^{\infty} \sqrt{n_{h,l}}\cdot \mu_{A}\bigg(l-\frac{1}{2}\bigg)\ &\text{(Lemma\ \ref{connect})}\\
				&\leq\sum_{l=1}^{\infty} \sqrt{C_{h}}\cdot l^{\frac{d_{h}}{2}}\cdot\lVert A \rVert_{\mu,h}\cdot \bigg(l-\frac{1}{2}\bigg)^{-b_{h}}\\
				&\leq C\cdot \sqrt{C_{h}}\cdot \lVert A \rVert_{\mu,h}\cdot \sum_{l=1}^{\infty} l^{-2},\text{ for some } C\in\mathbb R_{>0}.\	&(b_h\geq\frac{d_{h}}{2}+2)\\
				&< \infty\ .
\end{align*}
Hence $\widetilde{\tau}_{h} : C^{pol}_{h}G \rightarrow\mathbb{C}$ is well-defined and continuous.
\end{proof}

In the following, we show indeed $\widetilde{\tau}_{h} : C^{pol}_{h}G \rightarrow\mathbb{C}$ is a trace function extending the classical trace function
$\tau_{h} : \mathbb{C}G\rightarrow\mathbb{C}.$

\begin{Th}
For all $h\in G^{pol} ,\ \widetilde{\tau}_{h}$ is a trace function on $C^{pol}_{h}G$ extending $\tau_{h}.$
\end{Th}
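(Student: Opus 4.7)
The plan is to verify the two claims separately. First, I would check that $\widetilde{\tau}_h$ genuinely extends $\tau_h$, and then establish the trace property by density and continuity, rather than by a direct matrix manipulation (which would require care with the absolute convergence of double sums indexed by $C(h)$).

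For the extension claim, given $A=\sum_{g\in G}a_g g\in\mathbb{C}G$, a direct calculation from the definition of $A(\cdot,\cdot)$ gives $(A\delta_e)(g)=a_g$, so $A(g,e)=a_g$. Therefore $\widetilde{\tau}_h(A)=\sum_{g\in C(h)}A(g,e)=\sum_{g\in C(h)}a_g=\tau_h(A)$. This takes one line.

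For the trace property, I would use three ingredients already in place: (i) $\tau_h$ is a trace on $\mathbb{C}G$, cited via \cite{St}; (ii) $\mathbb{C}G$ is dense in $C^{pol}_h G$ in the topology induced by the pair $\lVert\cdot\rVert_{op}$ and $\lVert\cdot\rVert_{\mu,h}$, by the very definition of $C^{pol}_h G$; (iii) $\widetilde{\tau}_h$ is continuous in this topology, by Theorem~\ref{well}, which provides the estimate $|\widetilde{\tau}_h(A)|\leq C\lVert A\rVert_{\mu,h}$ when $h\neq e$ (and $|\widetilde{\tau}_h(A)|\leq\lVert A\rVert_{op}$ when $h=e$). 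Given $A,B\in C^{pol}_h G$, pick sequences $A_n,B_n\in\mathbb{C}G$ with $A_n\to A$ and $B_n\to B$ in both the operator norm and the seminorm. The algebra lemma already proven (the computation showing $C^{pol}_h G$ is closed under multiplication) gives $A_nB_n\to AB$ and $B_nA_n\to BA$ in the same topology. Applying continuity of $\widetilde{\tau}_h$ and using the fact that $\tau_h(A_nB_n)=\tau_h(B_nA_n)$ at each finite stage yields
\begin{equation*}
\widetilde{\tau}_h(AB)=\lim_{n\to\infty}\widetilde{\tau}_h(A_nB_n)=\lim_{n\to\infty}\tau_h(A_nB_n)=\lim_{n\to\infty}\tau_h(B_nA_n)=\lim_{n\to\infty}\widetilde{\tau}_h(B_nA_n)=\widetilde{\tau}_h(BA).
\end{equation*}

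There is no serious obstacle; the nontrivial work has already been absorbed into the preceding lemmas. The only point that deserves attention is confirming that the algebra lemma's argument really delivers convergence $A_nB_n\to AB$ in the seminorm $\lVert\cdot\rVert_{\mu,h}$ (and not merely in the operator norm), but inspection of that proof shows it does — the estimate $\lVert C_nB\rVert_{\mu,h}\to 0$ as $C_n=A-A_n\to 0$ is exactly what was established, so the density/continuity argument applies cleanly.
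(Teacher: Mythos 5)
Your proof is correct, but it takes a genuinely different route from the paper. The paper proves the trace identity by a direct computation with matrix coefficients: it writes $\widetilde{\tau}_h(AB)=\sum_{f\in G}\sum_{g\in C(h)}A(gf^{-1},e)B(f,e)$ using the equivariance $A(g,f)=A(gf^{-1},e)$, substitutes $k=fgf^{-1}$ (conjugation permutes $C(h)$), interchanges the order of summation by appealing to absolute convergence, and re-indexes with $l=f^{-1}k$ to recognize $\widetilde{\tau}_h(BA)$; the extension claim is left implicit. Your density-and-continuity argument instead pushes all the analytic work into results already established --- continuity of $\widetilde{\tau}_h$ from Theorem~\ref{well} and continuity of multiplication in the $(\lVert\cdot\rVert_{op},\lVert\cdot\rVert_{\mu,h})$ topology from the algebra lemma --- and reduces the trace identity to the elementary one on $\mathbb{C}G$, where all sums are finite. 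What your approach buys is that it sidesteps the justification of absolute convergence of the \emph{double} sum $\sum_{f\in G}\sum_{g\in C(h)}|A(gf^{-1},e)||B(f,e)|$, which the paper dispatches in a footnote citing the proof of Theorem~\ref{well} even though that proof literally controls only the single sum $\sum_{g\in C(h)}|A(g,e)|$; making the footnote rigorous requires an additional Cauchy--Schwarz argument in $f$. What the paper's computation buys is independence from the algebra lemma: it exhibits the trace identity directly on $C^{pol}_hG$ without invoking joint continuity of multiplication. Both arguments are valid within the paper's framework, and you correctly isolated the one point needing inspection, namely that the algebra lemma yields convergence of $A_nB_n$ in the seminorm and not merely in the operator norm.
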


\begin{proof}
The only nontrivial part is to show that $\forall A,B\in C^{pol}_{h}G$ we have $\widetilde{\tau}_{h}(AB)=\widetilde{\tau}_{h}(BA):$

Given $A,B\in C^{pol}_{h}G$, we have \footnote{We have absolute convergence in the sums (by the proof of Lemma $~\ref{well}$). So we can change the order of summation as we want.}
\begin{align*}
\widetilde{\tau}_{h}(AB)	&=\sum_{f\in G}\sum_{g\in C(h)}A(gf^{-1},e)B(f,e)\\
				&=\sum_{f\in G}\sum_{k\in C(h)}A(f^{-1}k,e)B(f,e)\	\	&(k=fgf^{-1})\\
				&=\sum_{k\in C(h)}\sum_{f\in G}A(f^{-1}k,e)B(f,e)\\
				&=\sum_{k\in C(h)}\sum_{l\in G}A(l,e)B(kl^{-1},e)\	\	&(l=f^{-1}k)\\
				&=\widetilde{\tau}_{h}(BA).\qedhere
\end{align*}
\end{proof}



\section{proof of Theorem ~\ref{mainpol} and its applications}\label{polynomialproof}

In this section, we present the proof of Theorem ~\ref{mainpol} and apply the result to derive lower bounds for the groups $S(M)$ and $P(M).$

\begin{proof}[Proof of Theorem ~\ref{mainpol}]
Since, for all $g,h\in G^{fin},$ we have 
$$g\simfin  h\implies [p_g]=[p_h]\in K_{0}^{fin}(C^{*}G),$$
 we get $\operatorname{rank}(K_{0}^{fin}(C^{*}G))\leq\mathcal{F}_G.$
Using the surjection $K_{0}^{fin}(C^{*}G)\twoheadrightarrow K_{0}^{fin}(C^{*}_{r}G),$ we can conclude that $\operatorname{rank}(K_{0}^{fin}(C^{*}_{r}G))\leq\operatorname{rank}(K_{0}^{fin}(C^{*}G)).$

For the rest, it suffices to prove that $S= G^{pol}\cap G^{fin}$ and $\mathcal{A}=C^{pol}_{S}G:=\bigcap_{h\in S}C^{pol}_{h}G$ 
satisfies the hypotheses of the Theorem ~\ref{framework}.

Since $C^{pol}_{h}G's$ are smooth subalgebras of $C^{*}_{r}G$ containing $\mathbb{C}G,$ we get $C^{pol}_{S}G$ is a smooth subalgebra of $C^{*}_{r}G$ containing 
$\mathbb{C}G.$

Since $G^{pol}\cap G^{fin}\subseteq G^{pol},$ for all $h\in G^{pol}\cap G^{fin},$ $\tau_{h}:\mathbb{C}G\rightarrow\mathbb{C}$ has a lift 
$$\widetilde{\tau}_{h}:C^{pol}_{h}G\rightarrow\mathbb{C}.$$
Therefore, for all $h\in S,$ the trace function $\tau_{h}:\mathbb{C}G\rightarrow\mathbb{C}$ has a lift 
$$\widetilde{\tau}_{h}:C^{pol}_{S}G\rightarrow\mathbb{C},$$
which is also a trace function.

Hence, we get $\operatorname{rank}(K^{fin}_{0}(C^{*}_{r} G))\geq |S/_{\simfin} |=|(G^{pol}\cap G^{fin})/_{\simfin} |=\mathcal{F}^{pol}_{G}.$
For the assembly map $\mu:K^{G}_{0}(EG)\rightarrow K_{0}(C^{*}G),$ we have $\operatorname{\operatorname{Im}}\mu\cap K^{fin,pol}_{0}(C^{*}G)=\{0\}.$
\end{proof}

\subsection{Applications}\label{polapply}
In this subsection, we combine the results from Weinberger and Yu \cite{Yu} and Theorem ~\ref{mainpol} to derive lower bounds for the ranks of the structure group and the group of positive scalar curvature metrics of manifolds.

Given a compact oriented manifold $M,$ we define the structure group $S(M)$ of $M$ to be the abelian group generated by the equivalence classes of elements of the form $(f,M\textprime),$
where  $M\textprime$ is a compact oriented manifold and $f:M\textprime\rightarrow M$ is an orientation preserving homotopy equivalence. We say $(f_1,M_1)$ is equivalent to $(f_2,M_2)$ if there exists an h-cobordism $(W;M_1,M_2)$ and a homotopy equivalence $F:W\rightarrow M$ such that restrictions of $F$ to $M_1$ and $M_2$ gives $f_1$ and $f_2$ respectively \cite[Definition 1.14]{Ranicki}.

We have the following result about the structure group $S(M)$ of a compact oriented manifold $M$ from Weinberger and Yu.
\begin{Th}\label{structuregroup}\cite{Yu}
Let $M$ be a compact oriented manifold with dimension $4k-1$ $(k>1).$ Suppose $\pi_1(M)=G$ and $g_1,\cdots, g_n$ be finite order elements in $G$ such that $g_i\neq e$
for all $i$ and $\{[p_{g_1}],\cdots,[p_{g_n}]\}$ generates an abelian group of $K_0(C^{*}G)$ with rank n. Suppose that any nonzero element in the abelian subgroup of 
$K_0(C^{*}G)$ generated by $\{[p_{g_1}],\cdots,[p_{g_n}]\}$ is not in the image of the map $\mu : K_{0}^{G}(EG)\rightarrow K_{0}(C^{*}G)$, then the rank of the structure group $S(M)$ is greater than or equal to n.
\end{Th}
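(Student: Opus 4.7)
The plan is to use the Higson--Roe analytic surgery long exact sequence together with a comparison to the topological/algebraic surgery sequence, so that each projection $[p_{g_i}] \in K_0(C^*G)$ produces a class in $S(M)$ whose higher rho invariant is nonzero. For a manifold of dimension $n = 4k-1$ with fundamental group $G$, there is an exact sequence
$$\cdots \to K_{4k}(C^*G) \xrightarrow{\partial} S^{\mathrm{ana}}_{4k-1}(M) \to K_{4k-1}(BG) \xrightarrow{\mu} K_{4k-1}(C^*G) \to \cdots$$
and Bott periodicity identifies the source of $\partial$ with $K_0(C^*G)$, so the classes $[p_{g_i}]$ sit in the domain of $\partial$.

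First I would check that $\partial[p_{g_1}], \ldots, \partial[p_{g_n}]$ are $\mathbb{Z}$-linearly independent in $S^{\mathrm{ana}}_{4k-1}(M)$. Exactness of the sequence above forces $\ker \partial$ to equal the image of the assembly map $K^G_{4k}(EG) \to K_{4k}(C^*G)$, which under Bott periodicity matches the image of the usual assembly $\mu : K^G_0(EG) \to K_0(C^*G)$. The hypothesis that no nontrivial $\mathbb{Z}$-combination of the $[p_{g_i}]$ lies in $\mathrm{im}\,\mu$ therefore yields $n$ linearly independent classes $\partial[p_{g_i}]$ in the analytic structure group.

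Next, I would lift these classes to the topological structure group by means of the higher rho map $\rho : S(M) \to S^{\mathrm{ana}}_{4k-1}(M)$. This map fits into a commutative ladder between the Wall surgery exact sequence and the analytic surgery exact sequence, whose leftmost rung is the natural homomorphism $L_{4k}(\mathbb{Z}G) \to K_{4k}(C^*G) \cong K_0(C^*G)$ sending the symmetric form on a f.g.\ projective module $p(\mathbb{Z}G)$ to $[p]$. Applying this to $p = p_{g_i}$ produces classes in $L_{4k}(\mathbb{Z}G)$ which, after pushing forward by $L_{4k}(\mathbb{Z}G) \to S(M)$ (Wall realization, using $k>1$ so that high-dimensional surgery applies), give elements $\sigma_i \in S(M)$ with $\rho(\sigma_i) = \partial[p_{g_i}]$. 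Since the $\rho(\sigma_i)$ are linearly independent, so are the $\sigma_i$, giving $\mathrm{rank}\,S(M) \geq n$.

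The step I expect to be the main obstacle is the compatibility between the $L$-theoretic and analytic sides of the ladder: one needs the composite $L_{4k}(\mathbb{Z}G) \to S(M) \xrightarrow{\rho} S^{\mathrm{ana}}_{4k-1}(M)$ to coincide with the boundary map $\partial$ applied to the symmetric signature, at the level of the specific projections $[p_{g_i}]$. Once this commutativity is verified, the rest of the argument is formal: exactness contributes linear independence of the $\partial[p_{g_i}]$, and the rho map transports that independence back to $S(M)$.
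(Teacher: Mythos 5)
First, note that the paper does not actually prove this statement: it is imported verbatim from Weinberger--Yu \cite{Yu} with a citation, so there is no in-paper argument to compare yours against. Your sketch follows the expected modern strategy (Higson--Roe analytic surgery sequence, exactness to detect classes modulo the image of assembly, and the commutative ladder to transport them back to $S(M)$), and that is indeed the shape of the argument in \cite{Yu}. The ladder compatibility you flag as the ``main obstacle'' is a known, citable theorem (Higson--Roe, Piazza--Schick), so that part is fine as an outline.

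The genuine gap is the lifting step ``Applying this to $p = p_{g_i}$ produces classes in $L_{4k}(\mathbb{Z}G)$.'' The projection $p_{g_i} = \frac{1}{d_i}(1 + g_i + \cdots + g_i^{d_i-1})$ has denominators, so it is not an idempotent in $\mathbb{Z}G$; there is no finitely generated projective $\mathbb{Z}G$-module $p_{g_i}(\mathbb{Z}G)$, and hence no symmetric form over $\mathbb{Z}G$ to feed into Wall realization. The image of $L_{4k}(\mathbb{Z}G) \to K_{4k}(C^*G) \cong K_0(C^*G)$ need not contain $[p_{g_i}]$ itself. The actual content of the Weinberger--Yu proof is to replace this step by an integral construction: one restricts to the finite cyclic subgroups $\langle g_i\rangle \cong \mathbb{Z}/d_i$, uses explicit nonsingular quadratic forms over $\mathbb{Z}[\mathbb{Z}/d_i]$ detected by the multisignature (equivalently, realized geometrically by lens-space-type constructions), and checks that their images under $L_{4k}(\mathbb{Z}[\mathbb{Z}/d_i]) \to L_{4k}(\mathbb{Z}G) \to K_0(C^*G)\otimes\mathbb{Q}$ span, together with the class of the trivial projection, the same subspace as $[p_{g_1}],\ldots,[p_{g_n}]$. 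This is precisely where the hypothesis $g_i \neq e$ is used (the trivial representation cannot be separated out integrally), and it is why the conclusion is a bound of $n$ rather than $n+1$. Your sketch, as written, never uses $g_i\neq e$, which is a sign that this step has been elided. A secondary, smaller point: the classical Sullivan--Wall structure set is only a pointed set with an $L_{4k}$-action, so to speak of the rank of $S(M)$ and of linear independence of the $\sigma_i$ you need the abelian group structure of Weinberger--Xie--Yu, under which the action map $L_{4k}(\mathbb{Z}G)\to S(M)$ is a homomorphism; this should be stated rather than assumed.
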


Now we combine the previous result about $S(M)$ with Theorem ~\ref{mainpol}, where the lower bound is in terms of $\mathcal{F}^{pol}_G.$

\begin{Cor}\label{structuregroupour}
For a compact oriented manifold $M$ with dimension $4k-1$ $(k>1),$
the rank of the structure group $S(M)$ is greater than or equal to $\mathcal{F}^{pol}_G-1,$
where $G=\pi_1(M).$
\end{Cor}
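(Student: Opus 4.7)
The plan is to apply Theorem \ref{structuregroup} directly to $M$, with $G = \pi_1(M)$, producing $n = \mathcal{F}^{pol}_G - 1$ pairwise non-$\simfin$-equivalent nontrivial finite-order elements $g_1, \ldots, g_n \in G$ whose projection classes $[p_{g_i}] \in K_0(C^*G)$ satisfy the hypotheses of Theorem \ref{structuregroup}. The whole argument is essentially a bookkeeping step on top of Theorem \ref{mainpol}, with one small subtlety accounting for the $-1$.

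First, I would pick a set of representatives of the $\simfin$-equivalence classes in $G^{pol} \cap G^{fin}$. Note that $e \in G^{pol} \cap G^{fin}$, since the conjugacy class $C(e) = \{e\}$ is a single point and trivially has polynomial growth (of degree zero); hence one of these $\mathcal{F}^{pol}_G$ equivalence classes is exactly $\{e\} / \simfin$. Discarding this class, I obtain $n := \mathcal{F}^{pol}_G - 1$ pairwise non-$\simfin$-equivalent elements $g_1, \ldots, g_n$ in $G^{pol} \cap G^{fin}$ with $g_i \neq e$ for all $i$, which satisfies the nontriviality requirement of Theorem \ref{structuregroup}.

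Next, I would verify the two remaining hypotheses of Theorem \ref{structuregroup} using Theorem \ref{mainpol}. The proof of Theorem \ref{framework}, applied with $S = G^{pol} \cap G^{fin}$ and $\mathcal{A} = C^{pol}_S G$, shows that the classes $[p_{g_1}], \ldots, [p_{g_n}]$ are $\mathbb{Z}$-linearly independent in $K_0(C^*_r G)$ via the upper-triangular trace matrix $\mathcal{M}_n$; lifting through the surjection $K_0(C^*G) \twoheadrightarrow K_0(C^*_r G)$, they are also $\mathbb{Z}$-linearly independent in $K_0(C^*G)$, so they generate an abelian subgroup of rank $n$. For the second hypothesis, Theorem \ref{mainpol} yields $\operatorname{Im}(\mu) \cap K_0^{fin,pol}(C^*G) = \{0\}$, and since the subgroup generated by $\{[p_{g_1}], \ldots, [p_{g_n}]\}$ sits inside $K_0^{fin,pol}(C^*G)$, every nonzero element of this subgroup lies outside $\operatorname{Im}(\mu)$.

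With both conditions verified, Theorem \ref{structuregroup} gives $\operatorname{rank}(S(M)) \geq n = \mathcal{F}^{pol}_G - 1$. There is no real obstacle; the only point requiring care is the $-1$, which arises precisely because the equivalence class of $e$ contributes to $\mathcal{F}^{pol}_G$ but is excluded by the nontriviality assumption in Theorem \ref{structuregroup}. All the analytic work (constructing the smooth subalgebra, lifting traces, and running the assembly-map argument) has already been absorbed into Theorem \ref{mainpol}, so the corollary is an immediate consequence.
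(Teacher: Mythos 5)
Your proof is correct and follows essentially the same route as the paper: invoke Theorem \ref{mainpol} (via the proof of Theorem \ref{framework}) to get $\mathcal{F}^{pol}_G-1$ linearly independent classes $[p_{g_i}]$ with $g_i\neq e$ whose span meets $\operatorname{Im}(\mu)$ trivially, then apply Theorem \ref{structuregroup}, with the $-1$ accounting for discarding the class of the identity. The paper's own proof is just a terser version of this same bookkeeping.
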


\begin{proof}
We have $\operatorname{rank}(K^{fin,pol}_{0}(C^{*}G))\geq\mathcal{F}^{pol}_G$ by the proof of Theorem ~\ref{mainpol} and, we have $\operatorname{Im}(\mu )\cap K_{0}^{fin,pol}(C^{*}G)=\{0\}.$
Since we have $e\in G^{fin}\cap G^{pol},$ we get the rank of the structure group $S(M)$ is greater than or equal to $\mathcal{F}^{pol}_G -1.$
\end{proof}

Let $r_{fin}(G)$ be the rank of the abelian group $K^{fin}_{0}(C^{*}G)$ generated
by $[p_g]$ for all finite order elements $g\in G.$ Here $g$ is allowed to be the
identity element $e.$ So we have $r_{fin}(G)=\operatorname{rank}(K^{fin}_{0}(C^{*}G))\geq\mathcal{F}^{pol}_G.$

If a compact smooth spin manifold $M$ has a positive scalar curvature metric and the dimension of $M$ is greater than or equal to $5,$
then we define (roughly) $P(M)$ to be the abelian group of equivalent classes of all positive scalar curvature metrics on M. For a more precise definition, we refer to 
\cite[Section 4]{Yu}.

We have the following result about the group $P(M)$ from Weinberger and Yu.
\begin{Th}\label{positivescalar}\cite{Yu}
  \begin{enumerate}
    \item
    Let $M$ be a compact smooth spin manifold with a positive scalar curvature metric and dimension $2k-1$ $(k>2).$
    The rank of the abelian group $P(M)$ is greater than or equal to $r_{fin}(G)-1.$

    \item
    Let $M$ be a compact smooth spin manifold with a positive scalar curvature metric and dimension $4k-1$ $(k>1).$
    The rank of the abelian group $P(M)$ is greater than or equal to $r_{fin}(G).$
  \end{enumerate}
\end{Th}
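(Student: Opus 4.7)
The plan is to use the higher rho invariant together with the Higson-Roe analytic surgery exact sequence to transfer elements from $K_0^{fin}(C^*G)$ into the group $P(M)$. For a compact spin manifold $M^n$ equipped with a PSC metric $g$, the spin Dirac operator on the universal cover $\widetilde{M}$ is uniformly invertible, and this invertibility allows one to define the higher rho invariant $\rho(g)$ as an element of the analytic structure group $S_n^G(\widetilde{M})$ (in the sense of Higson-Roe). Fixing a reference PSC metric $g_0$, the assignment $[g] \mapsto \rho(g) - \rho(g_0)$ descends to a group homomorphism $\Phi : P(M) \to S_n^G(\widetilde{M})$, so it suffices to produce many linearly independent elements in the image of $\Phi$.

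The key tool is the Higson-Roe analytic surgery exact sequence
\[
\cdots \to K_{n+1}(C^*G) \xrightarrow{\partial} S_n^G(\widetilde{M}) \to K_n^G(\widetilde{M}) \xrightarrow{\mu} K_n(C^*G) \to \cdots
\]
Next, for each finite-order element $h \in G$, I would construct (possibly after a stabilization) a PSC metric $g_h$ on $M$ such that $\Phi([g_h]) = \partial([p_h])$ in $S_n^G(\widetilde{M})$. This is the geometric heart of the argument: one uses the Gromov-Lawson-Rosenberg surgery procedure to build $g_h$ from a connected sum with a spherical space form $S^n/\langle h\rangle$ equipped with its canonical round PSC metric, and then appeals to a delocalized higher Atiyah-Patodi-Singer type index theorem to identify the difference of rho invariants with $\partial([p_h])$.

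Since the sequence is exact, $\ker(\partial) = \operatorname{Im}(\mu)$, so the composition $K_0^{fin}(C^*G) \xrightarrow{\partial} S_n^G(\widetilde{M})$ factors through the quotient $K_0^{fin}(C^*G)/\bigl(K_0^{fin}(C^*G)\cap \operatorname{Im}(\mu)\bigr)$. The class $[p_e]$ automatically lies in $\operatorname{Im}(\mu)$, coming from the unit class in $K_n^G(\widetilde{M})$, so this quotient has rank at least $r_{fin}(G) - 1$. Pulling back along $\Phi$ gives part (1). For part (2), when $\dim M = 4k-1$ with $k>1$, one gains an extra unit of rank by using the canonical trace on $C^*G$ (equivalently, the numerical $L^2$-index in degree $4k$) as an additional invariant: it detects $[p_e]$ but vanishes on the $\partial$-boundary image, so combining this invariant with the previous ones recovers the full $r_{fin}(G)$.

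The hard part will be the construction in the second step, namely realizing each $[p_h]$ for a torsion class $h$ as the boundary of a difference of rho invariants of honest PSC metrics, and verifying the resulting equality in $S_n^G(\widetilde{M})$ rather than merely up to some finite indeterminacy. This requires a careful interplay of Stolz's PSC exact sequence with Gromov-Lawson surgery and a higher APS theorem, and is where most of the technical weight of the Weinberger-Yu argument sits; the exact-sequence bookkeeping in the last paragraph is then comparatively formal.
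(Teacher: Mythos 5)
This statement is not proved in the paper at all: Theorem \ref{positivescalar} is imported verbatim from Weinberger--Yu \cite{Yu} and used as a black box. The only argument the paper supplies in its vicinity is the one-line deduction of Corollary \ref{positivescalarour}, namely $r_{fin}(G)=\operatorname{rank}(K^{fin}_0(C^*G))\geq\mathcal{F}^{pol}_G$, which is supplied by Theorem \ref{mainpol}. So there is no internal proof to compare your sketch against; what you have written is an attempted reconstruction of the external Weinberger--Yu argument, and it should be judged on those terms.

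As such a reconstruction it is in the right circle of ideas (index-theoretic invariants of PSC metrics, Gromov--Lawson surgery, spherical space forms), but the load-bearing step is both deferred and, as stated, broken. Forming a connected sum of $M$ with $S^n/\langle h\rangle$ changes the fundamental group to a free product, so it does not produce a new PSC metric on $M$ itself, which is what an element of $P(M)$ must be; the actual mechanism in \cite{Yu} is to propagate a given PSC metric across a spin cobordism of the form $(M\times[0,1])\#Z$ for a closed spin $(n+1)$-manifold $Z$ mapping to $BG$, and to identify the relative higher index of the two resulting metrics on $M$ with the higher index of $Z$. The realization problem then becomes: which classes in the finite part arise as higher indices of such closed manifolds $Z$ built from finite quotients of spheres. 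This is also where your explanation of the $2k-1$ versus $4k-1$ dichotomy goes astray: the loss of one rank in the general odd case is not because ``a trace detects $[p_e]$ but vanishes on the boundary image,'' but because the geometric construction only realizes combinations of differences such as $d_h([p_h]-[p_e])$, which span a subgroup of rank $r_{fin}(G)-1$; in dimension $4k-1$ one recovers the missing rank by additionally realizing a multiple of $[p_e]$ itself using a closed spin $4k$-manifold with nonzero $\widehat{A}$-genus. Finally, your appeal to $\ker(\partial)=\operatorname{Im}(\mu)$ presupposes that the finite part meets $\operatorname{Im}(\mu)$ only in the span of $[p_e]$, which is itself a nontrivial input (it is exactly what Theorem \ref{framework}/\ref{mainpol} of this paper establishes for the polynomially growing part, and what \cite{Yu} establishes under a finite-embeddability hypothesis); it cannot be treated as formal bookkeeping.
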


In the following, we combine the previous result about $P(M)$ with Theorem ~\ref{mainpol}. 
The lower bounds are in terms of $\mathcal{F}^{pol}_G.$

\begin{Cor}\label{positivescalarour}
Let $M$ be a compact smooth spin manifold with a positive scalar curvature metric and let $G=\pi_1(M).$
  \begin{enumerate}
    \item
    If $M$ has dimension $2k-1$ $(k>2),$ then the rank of the abelian group $P(M)$ is greater than or equal to $\mathcal{F}^{pol}_G -1.$

    \item
    If $M$ has dimension $4k-1$ $(k>1),$ then the rank of the abelian group $P(M)$ is greater than or equal to $\mathcal{F}^{pol}_G.$
  \end{enumerate}

\end{Cor}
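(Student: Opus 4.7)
The plan is to simply combine Theorem \ref{positivescalar} of Weinberger--Yu with our main result Theorem \ref{mainpol}, via the intermediate invariant $r_{fin}(G)$. The content of the corollary is essentially a quantitative strengthening of Theorem \ref{positivescalar} in which the abstract lower bound $r_{fin}(G)$ is replaced by the concrete, computable lower bound $\mathcal{F}^{pol}_G$ coming from polynomially growing conjugacy classes.

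First I would record the chain of inequalities that forms the backbone of the argument. By the definition of $r_{fin}(G)$ as $\operatorname{rank}(K^{fin}_0(C^{*}G))$, and by the lower bound proved in Theorem \ref{mainpol}, we have
\begin{equation*}
r_{fin}(G)\;=\;\operatorname{rank}(K^{fin}_0(C^{*}G))\;\geq\;\operatorname{rank}(K^{fin}_0(C^{*}_r G))\;\geq\;\mathcal{F}^{pol}_G.
\end{equation*}
The second inequality uses the surjection $K^{fin}_0(C^{*}G)\twoheadrightarrow K^{fin}_0(C^{*}_r G)$ induced by the identity on $\mathbb{C}G$, and the third inequality is precisely the lower bound established in the proof of Theorem \ref{mainpol} using the smooth subalgebra $C^{pol}_S G$ and the lifted traces $\widetilde{\tau}_h$.

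Next I would feed these inequalities into the two cases of Theorem \ref{positivescalar}. In case (1), when $\dim M = 2k-1$ with $k>2$, Theorem \ref{positivescalar} yields $\operatorname{rank}(P(M)) \geq r_{fin}(G) - 1$, which combined with $r_{fin}(G) \geq \mathcal{F}^{pol}_G$ immediately gives $\operatorname{rank}(P(M)) \geq \mathcal{F}^{pol}_G - 1$. In case (2), when $\dim M = 4k-1$ with $k>1$, Theorem \ref{positivescalar} yields $\operatorname{rank}(P(M)) \geq r_{fin}(G)$, and the same inequality gives $\operatorname{rank}(P(M)) \geq \mathcal{F}^{pol}_G$.

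Since both cited results do all the heavy lifting, there is no real obstacle: the proof is essentially a one-line substitution. The only thing to be mildly careful about is to cite the correct part of Theorem \ref{positivescalar} for each dimension hypothesis, and to note that Theorem \ref{mainpol} already supplies not just the rank bound but also the transversality statement $\operatorname{Im}(\mu)\cap K^{fin,pol}_0(C^{*}G)=\{0\}$, which is implicitly used inside the Weinberger--Yu proof of Theorem \ref{positivescalar} when one wants to realise the bound in terms of the projections $[p_g]$ for $g\in G^{pol}\cap G^{fin}$.
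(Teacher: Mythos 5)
Your proposal is correct and is essentially identical to the paper's own argument, which likewise just invokes Theorem \ref{positivescalar} together with the inequality $r_{fin}(G)=\operatorname{rank}(K^{fin}_{0}(C^{*}G))\geq\operatorname{rank}(K^{fin}_{0}(C^{*}_{r}G))\geq\mathcal{F}^{pol}_G$ supplied by Theorem \ref{mainpol}. The paper compresses this to a single line, so your version is simply a more explicit write-up of the same substitution.
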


\begin{proof}
We have $r_{fin}(G)\geq\mathcal{F}^{pol}_G.$
\end{proof}

\section{Polynomially Full Groups}
In this section, we define the class of polynomially full groups.
We show that subgroups, products, and finite extensions of polynomially full groups are also polynomially full.
For polynomially full group $G,$ we show that
$$K^{fin}_{0}(C^{*}_{r}G)\cong K^{fin}_{0}(C^{*}G)\cong\bigoplus_{i=1}^{\mathcal{F}_G}\mathbb{Z}.$$
The class of polynomially full groups includes trivially all finite groups and finitely generated torsion-free groups. We show that it also includes all finitely generated virtually nilpotent groups.
At the end of the section, we derive formulas for the number $\mathcal{F}_G,$ where $G$ is finitely generated abelian group, dihedral group, or symmetric group.

For a finitely generated group $G$ with a finite generating set $S,$
we denote the word-length norm by $\lVert .\rVert,$ $\lVert .\rVert_{S},$ or $\lVert .\rVert_{G}.$
For $g\in G,$ we denote the conjugacy class of $g$ in $G$ by $C^{G}(g).$
We denote the set of elements in the conjugacy class of $g$ with length (with respect to $S$) $l$
by $C^{G}_{l}(g).$

In the following, we give two equivalent conditions for a group. We use these conditions to define the class of polynomially full groups.
\begin{Prop}\label{equivalentconditions}
 For a finitely generated group $G$ the following are equivalent:
 \begin{enumerate}
  \item $G^{fin}\subseteq G^{pol}.$
  \item For all $g\in G^{fin}$ there exists $h\in G^{fin}\cap G^{pol}$ such that $g\simfin h.$
 \end{enumerate}
\end{Prop}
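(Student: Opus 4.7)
The implication $(1)\Rightarrow(2)$ is immediate: if every finite-order element already has polynomially growing conjugacy class, then for any $g\in G^{fin}$ the choice $h=g$ works, since $\simfin$ is reflexive.

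For the converse $(2)\Rightarrow(1)$, the plan is to exploit the equivalent characterization of $\simfin$ given in the Remark after Definition~\ref{Def}: if $g\simfin h$ then $g$ and $h$ share a common order $d$, and there exists $a\in\mathbb{N}$ with $g^{a}\in C(h)$. Since both $g$ and $g^{a}$ have order $d$, we must have $\gcd(a,d)=1$, so we may pick $b\in\mathbb{N}$ with $ab\equiv 1\pmod d$; then $g^{ab}=g$. The key object is the map
\[
\phi: C(g)\longrightarrow C(g^{a}),\qquad \phi(y)=y^{a},
\]
which is well defined (for $y=\gamma g\gamma^{-1}$ we have $y^{a}=\gamma g^{a}\gamma^{-1}\in C(g^{a})$) and injective, because the map $z\mapsto z^{b}$ on $C(g^{a})$ is a two-sided inverse (using $g^{ab}=g$ and the analogous identity $(g^{a})^{ba}=g^{a}$).

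Next, observe that $\phi$ distorts the word-length norm by at most a factor of $a$: writing $y=s_{1}\cdots s_{l}$ as a geodesic word in the generating set $S$, the expression $y^{a}=s_{1}\cdots s_{l}\,s_{1}\cdots s_{l}\cdots s_{1}\cdots s_{l}$ has length at most $a\,\lVert y\rVert_{w}$. Consequently, $\phi$ restricts to an injection $C_{l}(g)\hookrightarrow\bigcup_{m=0}^{al}C_{m}(g^{a})$, so
\[
n_{g,l}\;\le\;\sum_{m=0}^{al}n_{g^{a},m}\;=\;\sum_{m=0}^{al}n_{h,m},
\]
where the last equality uses $C(g^{a})=C(h)$. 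Since $h\in G^{pol}$, we have $n_{h,m}\le c\cdot m^{d_{h}}$ for some constants, which yields $n_{g,l}\le c'\cdot l^{d_{h}+1}$ for a suitable constant $c'$. Hence $g\in G^{pol}$, completing the proof that $G^{fin}\subseteq G^{pol}$.

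The only point demanding care is the coprimality of $a$ and $d$, which is forced precisely by the requirement in the Remark that $g$ and $h$ have the \emph{same} order $d$; without this the map $\phi$ could fail to be invertible and the argument would collapse. Once this observation is in place, the rest is a routine word-length estimate.
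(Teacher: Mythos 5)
Your proof is correct and follows essentially the same route as the paper: both establish a norm-bounded bijection $y\mapsto y^{a}$ from $C(g)$ onto a subset of $C(h)$ (with inverse $z\mapsto z^{b}$) and then bound $n_{g,l}$ by the number of elements of $C(h)$ of length at most $al$. Your explicit observation that $\gcd(a,d)=1$ forces the existence of $b$ with $ab\equiv 1\pmod d$ is in fact slightly more careful than the paper, which takes the existence of a suitable $b$ for granted.
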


\begin{proof}
 $(1)\implies(2):$ Obvious.\\
 $(2)\implies(1):$ Given $g\in G^{fin},$ there exists $h\in G^{fin}\cap G^{pol}$ such that $g\simfin h.$
 So there exist $a,b\in\mathbb{N}$ and $f\in G$ such that $g^{a}=fhf^{-1}$ and $h^{b}\in C^{G}(g).$
 Define $A_{l}=\{\alpha^{a}\ :\ \alpha\in C^{G}(g),\ \lVert\alpha\rVert=l\}.$
 Define $\operatorname{i}:C^{G}_{l}(g)\rightarrow A_{l}$ by 
 $$\operatorname{i}(\alpha)=\alpha^{a}$$
 and $\operatorname{j}:A_{l}\rightarrow C^{G}_{l}(g)$ by 
 $$\operatorname{j}(\beta)=\beta^{b}.$$
 It is easy to see that, $\operatorname{j}\circ\operatorname{i}=\operatorname{id}_{C^{G}_{l}(g)}$
 and $\operatorname{i}\circ\operatorname{j}=\operatorname{id}_{A_{l}}.$
 Hence, we have 
 $$|C^{G}_{l}(g)|=|A_{l}|.$$
 
 Now, let $B_{l}=\{\beta\in C^{G}(h)\ :\ \lVert\beta\rVert\leq a\cdot l\}.$
 We show that $A_{l}\subseteq B_{l}:$
 
 Given $\omega\in A_{l},$ there exists $\alpha\in C^{G}(g)$ with $\lVert\alpha\rVert=l$ and $\omega=\alpha^{a}.$
 Since $\alpha\in C^{G}(g),$ there exists $\gamma\in G$ such that $\alpha=\gamma g\gamma^{-1}.$
 So $\omega=\alpha^{a}=\gamma g^{a}\gamma^{-1}=\gamma fhf^{-1}\gamma^{-1}.$
 Hence, $\omega\in C^{G}(h)$ with $\lVert\omega\rVert=\lVert\alpha^{a}\rVert\leq a\cdot\lVert\alpha\rVert=a\cdot l.$
 Thus, $\omega\in B_{l}.$
 Therefore, we have $A_{l}\subseteq B_{l}.$
 
 So we get $|C^{G}_{l}(g)|=|A_{l}|\leq |B_{l}|.$
 Since $h\in G^{pol},$ $|B_{l}|$ is bounded from above by a polynomial of $l.$
 Thus, we get $g\in G^{pol}.$
\end{proof}

\begin{Def}
 Let $G$ be a finitely generated group. We say that $G$ is polynomially full, if it satisfies the conditions from Proposition ~\ref{equivalentconditions}.
\end{Def}
Obviously finite groups and finitely generated torsion-free groups are polynomially full.

The following result is the motivation behind the definition of polynomially full groups.
\begin{Th}
 For a polynomially full group $G$ we have 
 $$K^{fin}_{0}(C^{*}_{r}G)\cong K^{fin}_{0}(C^{*}G)\cong\bigoplus_{i=1}^{\mathcal{F}_G}\mathbb{Z}.$$
\end{Th}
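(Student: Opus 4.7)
The plan is to upgrade the rank inequalities of Theorem~\ref{mainpol} to an actual structural identification with a free abelian group, by reusing the trace-matrix argument from the proof of Theorem~\ref{framework}. Polynomial fullness of $G$ means $G^{fin}\subseteq G^{pol}$, hence $\mathcal{F}^{pol}_G=\mathcal{F}_G$, and Theorem~\ref{mainpol} forces both $\operatorname{rank}(K^{fin}_0(C^{*}_r G))$ and $\operatorname{rank}(K^{fin}_0(C^{*}G))$ to equal $\mathcal{F}_G$. What is left is to rule out torsion, i.e.\ to produce an honest free $\mathbb{Z}$-basis.

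First I would treat the reduced algebra. Choose representatives $\{s_i\}_{i\in I}$ of the classes in $G^{fin}/{\simfin}$, so $|I|=\mathcal{F}_G$. Since $g\simfin h$ implies $[p_g]=[p_h]$, the family $\{[p_{s_i}]\}_{i\in I}$ generates $K^{fin}_0(C^{*}_r G)$. For $\mathbb{Z}$-linear independence, any hypothetical relation $\sum_{k=1}^{n}c_k[p_{s_{i_k}}]=0$ only involves finitely many $s_{i_k}$, each of which lies in $G^{pol}$ by polynomial fullness. The finite intersection $\mathcal{A}=\bigcap_{k=1}^{n}C^{pol}_{s_{i_k}}G$ is a smooth dense subalgebra of $C^{*}_r G$ (by the argument already invoked in the proof of Theorem~\ref{mainpol}), all the traces $\widetilde{\tau}_{s_{i_k}}$ are simultaneously defined on $\mathcal{A}$, and $K_0(\mathcal{A})\cong K_0(C^{*}_r G)$. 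After ordering the $s_{i_k}$ by increasing order, the matrix $(\widetilde{\tau}_{s_{i_j}}([p_{s_{i_k}}]))_{j,k}$ from the proof of Theorem~\ref{framework} is upper triangular with diagonal entries at least $1/\operatorname{order}(s_{i_k})>0$, hence invertible, forcing every $c_k=0$. Thus $K^{fin}_0(C^{*}_r G)\cong\bigoplus_{i=1}^{\mathcal{F}_G}\mathbb{Z}$.

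For the maximal algebra I would use the canonical surjection $\pi:K^{fin}_0(C^{*}G)\twoheadrightarrow K^{fin}_0(C^{*}_r G)$ induced by the identity on $\mathbb{C}G$, under which $[p_{s_i}]\mapsto[p_{s_i}]$. By the same equivalence argument $\{[p_{s_i}]\}_{i\in I}$ also generates $K^{fin}_0(C^{*}G)$, and any finite integral relation upstairs pushes forward to the corresponding relation downstairs, which by the previous paragraph must be trivial. So the $[p_{s_i}]$ are already $\mathbb{Z}$-linearly independent in $K^{fin}_0(C^{*}G)$, giving $K^{fin}_0(C^{*}G)\cong\bigoplus_{i=1}^{\mathcal{F}_G}\mathbb{Z}$; the surjection $\pi$ therefore identifies the two free bases and is automatically an isomorphism.

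The main conceptual step is the very first one: polynomial fullness is precisely what lets the trace-matrix argument of Theorem~\ref{framework} run for every finite-order element rather than only for those in $G^{pol}$. Beyond that input the calculation is routine. The one mild technical remark is that independence has to be verified relation by relation, each time with a different finite intersection $\mathcal{A}$; this is harmless because every element of a $K$-theory group is a finite integral combination of generators, so we never actually need an infinite intersection of smooth subalgebras.
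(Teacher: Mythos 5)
Your proposal is correct and follows essentially the same route as the paper: surjectivity of the map from $\bigoplus_{i=1}^{\mathcal{F}_G}\mathbb{Z}$ comes from $\{[p_g]:g\in G^{fin}\}=\{[p_g]:g\in G^{fin}\cap G^{pol}\}$, and injectivity comes from the upper-triangular trace-matrix argument of Theorem~\ref{framework}, with the maximal case handled through the canonical surjection onto the reduced $K$-theory. Your remark that each finite relation only requires a finite intersection of the algebras $C^{pol}_{h}G$ is a slightly more careful packaging than the paper's single intersection $C^{pol}_{S}G$, but it is not a different argument.
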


\begin{proof}
Let $G$ be a polynomially full group. 
We have
\begin{align*}
 \mathcal{F}_G	&= |G^{fin}/_{\simfin} |\\
		&= |(G^{fin}\cap G^{pol})/_{\simfin} |\\
		&= \mathcal{F}^{pol}_G
\end{align*}

So we have $\bigoplus_{i=1}^{\mathcal{F}_G}\mathbb{Z}=\bigoplus_{i=1}^{\mathcal{F}^{pol}_G}\mathbb{Z}.$ Let $F=(G^{fin}\cap G^{pol})/_{\simfin} .$
Hence, we have $\bigoplus_{i=1}^{\mathcal{F}^{pol}_G}\mathbb{Z}\cong \bigoplus_{[g]\in F}\mathbb{Z}.$
Now, define 
$$\phi:\bigoplus_{[g]\in F}\mathbb{Z}\rightarrow K^{fin}_{0}(C^{*}_{r}G)$$ as 
$$\phi(\delta_{[g]})=[p_g],$$
where $\delta_{[g]}$ is the canonical basis element corresponding to the $[g]$-component.
Since $g\simfin h$ implies $[p_g]=[p_h],$ $\phi$ is a well defined homomorphism.
Recall that, $K^{fin}_{0}(C^{*}_{r}G)$ is generated by the set $\{[p_g]:g\in G^{fin}\}.$ Since $G$ is polynomially full, we have 
$$\{[p_g]:g\in G^{fin}\}=\{[p_g]:g\in G^{fin}\cap G^{pol}\}.$$
Hence, $\phi$ is surjective.

On the other hand, by the proof of the Theorem ~\ref{framework}, $\phi(\delta_{[g]})=[p_g]'s$ are $\mathbb{Z}$-linearly independent. Thus, $\phi$ is injective. So we have $\bigoplus_{[g]\in F}\mathbb{Z}\cong K^{fin}_{0}(C^{*}_{r}G).$ Therefore, we get $K^{fin}_{0}(C^{*}_{r}G)\cong\bigoplus_{i=1}^{\mathcal{F}_G}\mathbb{Z}.$
The isomorphism $K^{fin}_{0}(C^{*}G)\cong\bigoplus_{i=1}^{\mathcal{F}_G}\mathbb{Z}$ can be shown similarly.
\end{proof}

In the following, we show that finite extensions and images of polynomially full groups under homomorphisms with finite kernels are also polynomially full.
\begin{Prop}
 Let $F$ be a finite group and let $G,H$ be finitely generated groups.
 If we have a short exact sequence
 \[
    \begin{tikzcd}
      &1\arrow{r} &F\arrow{r}{\alpha} &G\arrow{r}{\beta} &H\arrow{r} &1,
    \end{tikzcd}
 \]
 then $G$ is polynomially full if and only if $H$ is polynomially full.
 In this case, we have $\mathcal{F}_H\leq\mathcal{F}_G.$
\end{Prop}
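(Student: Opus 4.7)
The plan rests on three simple observations about the quotient map $\beta\colon G\to H$. First, every finite order element of $H$ lifts to a finite order element of $G$: if $\beta(g)=h$ has order $n$, then $g^n\in F$ has order dividing $|F|$, so $g$ has order dividing $n|F|$; in particular, $\beta$ restricts to a surjection $G^{fin}\twoheadrightarrow H^{fin}$. Second, since $\beta$ is a surjective group homomorphism, $C^{H}(\beta(g))=\beta(C^{G}(g))$ for every $g\in G$. Third, choosing a finite generating set $S_G:=\tilde S_H\cup F$ for $G$, where $\tilde S_H$ is a set-theoretic lift of a finite generating set $S_H$ of $H$, yields the inequality $\lVert\beta(x)\rVert_H\leq\lVert x\rVert_G$ for all $x\in G$ together with a ``short lift'' property: every $h'\in H$ admits a lift $\tilde g\in G$ with $\lVert\tilde g\rVert_G\leq\lVert h'\rVert_H$. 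Set $D:=\max_{f\in F}\lVert f\rVert_G<\infty$.

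For the direction ``$H$ polynomially full $\Rightarrow$ $G$ polynomially full'', take $g\in G^{fin}$. Any $g'\in C_l^G(g)$ projects into $C^H(\beta(g))$ at word length at most $l$, and each fiber of $\beta$ has size $|F|$, so
\[|C_l^G(g)|\;\leq\; |F|\sum_{k=0}^{l}|C_k^H(\beta(g))|,\]
which is polynomial in $l$ whenever $\beta(g)\in H^{pol}$.

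For the converse ``$G$ polynomially full $\Rightarrow$ $H$ polynomially full'', take $h\in H^{fin}$ and lift it to $g\in G^{fin}$. Given $h'\in C_l^H(h)$, pick a short lift $\tilde g\in G$ with $\lVert\tilde g\rVert_G\leq l$. By $C^H(\beta(g))=\beta(C^G(g))$, there exists $g'\in C^G(g)$ with $\beta(g')=h'$, and then $g'$ differs from $\tilde g$ by an element of $F$, so $\lVert g'\rVert_G\leq l+D$. The assignment $h'\mapsto g'$ is injective (distinct $h'$ have distinct images under $\beta$), hence
\[|C_l^H(h)|\;\leq\;\sum_{k=0}^{l+D}|C_k^G(g)|,\]
which is polynomial in $l$ whenever $g\in G^{pol}$.

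For the inequality $\mathcal{F}_H\leq\mathcal{F}_G$, the map $\beta$ extends to an algebra homomorphism $\mathbb{C}G\to\mathbb{C}H$ sending $p_g$ to $p_{\beta(g)}$ for every $g\in G^{fin}$ (the projection formula collapses correctly even when $\beta(g)$ has smaller order than $g$). Consequently, $g_1\simfin g_2$ in $G$ implies $p_{\beta(g_1)}=\beta(\gamma)p_{\beta(g_2)}\beta(\gamma)^{-1}$, i.e.\ $\beta(g_1)\simfin\beta(g_2)$ in $H$. Thus $\beta$ descends to a well-defined map $\bar\beta\colon G^{fin}/_{\simfin}\to H^{fin}/_{\simfin}$, and $\bar\beta$ is surjective by the lifting observation; the inequality follows. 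The main technical point is the converse direction: one must control the word length of a lift that lies \emph{inside} the specific conjugacy class $C^G(g)$, not merely of an arbitrary lift. Adjoining $F$ to the generating set of $G$ is precisely what allows an arbitrary short lift to be corrected to one in $C^G(g)$ at the cost of a uniform additive constant $D$.
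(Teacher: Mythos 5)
Your proof is correct and follows essentially the same route as the paper: compatible generating sets so that $\beta$ is $1$-Lipschitz and admits short lifts, the uniform bound coming from finiteness of $\ker\beta$ to compare $|C^G_l|$ and $|C^H_l|$ in both directions, and pushing $\simfin$ forward along $\beta$ to get $\mathcal{F}_H\leq\mathcal{F}_G$. The only (cosmetic) differences are that you verify $H^{fin}\subseteq H^{pol}$ directly rather than via the paper's equivalent condition (2), and you justify $g_1\simfin g_2\Rightarrow\beta(g_1)\simfin\beta(g_2)$ more explicitly through the identity $\beta(p_g)=p_{\beta(g)}$, which the paper leaves implicit.
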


\begin{proof}
 Without loss of generality, assume that the generating set of $H$ is in the image of the generating set of $G$ under $\beta.$
 
 Assume $G$ is polynomially full. 
 Given $h\in H^{fin},$ since $\beta$ is onto, there exists $g\in G$ with $\beta(g)=h.$
 Since $F$ is finite, we get $g\in G^{fin}.$
 Now, since $G$ is polynomially full, $\exists g'\in G^{fin}\cap G^{pol}$ such that $g\simfin g'.$
 Now, let $h'=\beta(g')\in H.$ Since $g'\in G^{fin},$ we have $h'\in H^{fin}.$
 Since $g\simfin g',$ we have $h\simfin h'.$
 Now, since $F$ is finite, there exists $R\in\mathbb{N}$ such that $\operatorname{ker}\beta=\operatorname{\operatorname{Im}}\alpha\subseteq\operatorname{B}_{R}(e_{G}),$
 where $\operatorname{B}_{R}(e_{G})$ is the closed ball around the identity element of $G$ with radius $R.$
 
 Since, $\cup_{l=1}^{N} C^H_{l}(h')\subseteq\beta(\cup_{l=1}^{N+R} C^G_{l}(g')),$ we have $|\cup_{l=1}^{N} C^H_{l}(h')|\leq |\cup_{l=1}^{N+R} C^G_{l}(g')|,$ 
 and right hand side is bounded by some polynomial of $N+R$ (hence by a polynomial of $N$).
 So we get $h'\in H^{pol}.$
 Hence, $h'\in H^{fin}\cap H^{pol}$ with $h'\simfin h.$
 Therefore, $H$ is polynomially full.
 
 For the converse, assume $H$ is polynomially full. Given $g\in G^{fin},$ let $h=\beta(g)\in H^{fin}.$
 Since $H$ is polynomially full, we have $h\in H^{pol}.$
 Now we have 
 $$\beta(\cup_{i=0}^{l}C^G_{i}(g))\subseteq\cup_{i=0}^{l}C^H_{i}(h).$$
 Hence, we get $|\cup_{i=0}^{l}C^G_{i}(g)|\leq |F|\cdot |\beta(\cup_{i=0}^{l}C^G_{i}(g))|\leq |F|\cdot |\cup_{i=0}^{l}C^H_{i}(h)|.$
 Since $h\in H^{pol}$ and $F$ is finite, right hand side is bounded by a polynomial of $l.$
 Thus, we get $g\in G^{pol}.$
 Therefore, $G$ is polynomially full.
 
 Now, given $g_{1},g_{2}\in G^{fin},$ $g_{1}\simfin g_{2}$ (in $G$) implies $\beta(g_{1})\simfin \beta(g_{2})$ (in $H$).
 Thus, we have $\mathcal{F}_{H}=|H^{fin}/_{\simfin} |=|\beta(G^{fin})/_{\simfin} |\leq |G^{fin}/_{\simfin} |=\mathcal{F}_{G}.$
\end{proof}

In the following, we prove that the property of being polynomially full is inherited to finitely generated subgroups.
\begin{Prop}\label{polfullinherited}
 Let $G$ be a finitely generated group. Let $H$ be a finitely generated subgroup of $G.$
 If $G$ is polynomially full, then $H$ is also polynomially full.
\end{Prop}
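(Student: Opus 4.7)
The plan is to verify condition (1) of Proposition~\ref{equivalentconditions} for $H$, namely to show that $H^{fin}\subseteq H^{pol}$. So I pick an arbitrary $h\in H^{fin}$; since $h$ has finite order in $H$, it has the same finite order when viewed as an element of $G$, so $h\in G^{fin}$. By hypothesis $G$ is polynomially full, which gives $h\in G^{pol}$: there exist constants $c\in\mathbb R_{>0}$ and $d\in\mathbb{N}$ with $|C^{G}_{l}(h)|\leq c\cdot l^{d}$ for every $l$, where the length is measured in the word metric of $G$.

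The remaining task is to transfer this polynomial bound from the $G$-word metric to the $H$-word metric. Two elementary observations do the job. First, $C^{H}(h)\subseteq C^{G}(h)$, since conjugation by an element of $H$ is a special case of conjugation by an element of $G$. Second, letting $S_{H}$ be the fixed finite generating set of $H$, each element of $S_{H}$ lies in $G$ and has some finite $G$-word length, so $K:=\max_{s\in S_{H}}\lVert s\rVert_{G}$ is a finite constant satisfying $\lVert g\rVert_{G}\leq K\cdot\lVert g\rVert_{H}$ for every $g\in H$.

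Combining the two, if $g\in C^{H}_{l}(h)$ (meaning $g$ is an $H$-conjugate of $h$ with $\lVert g\rVert_{H}=l$), then $g\in C^{G}(h)$ and $\lVert g\rVert_{G}\leq K\cdot l$, so
\[
C^{H}_{l}(h)\ \subseteq\ \bigcup_{i=0}^{\lfloor Kl\rfloor}C^{G}_{i}(h).
\]
Applying the polynomial bound for $h$ in $G$ yields
\[
|C^{H}_{l}(h)|\ \leq\ \sum_{i=0}^{\lfloor Kl\rfloor}c\cdot i^{d}\ \leq\ c\,(Kl+1)^{d+1},
\]
which is polynomial in $l$. Hence $h\in H^{pol}$. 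Since $h\in H^{fin}$ was arbitrary, $H^{fin}\subseteq H^{pol}$, so $H$ is polynomially full.

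There is no real obstacle here: the only substantive point is the bilipschitz comparison between the word metric of $H$ and the restriction of the word metric of $G$ to $H$, which is immediate once one notes that $H$ is finitely generated by elements of $G$ of bounded $G$-length. Everything else is a direct summation of a polynomial. Note that the hypothesis that $H$ be finitely generated is essential here, since without it the constant $K$ need not exist.
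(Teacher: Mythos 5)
Your proof is correct and follows essentially the same route as the paper's: both reduce the statement to the inclusion $C^{H}(h)\subseteq C^{G}(h)$ combined with a Lipschitz comparison of the two word metrics, then sum the polynomial bound over the relevant radii. The only cosmetic difference is that the paper normalizes the generating sets so that $T\subseteq S$ (making the comparison constant equal to $1$), whereas you carry an explicit constant $K=\max_{s\in S_{H}}\lVert s\rVert_{G}$; your version is if anything slightly more careful, since it avoids implicitly relying on the fact that membership in $G^{pol}$ is unaffected by enlarging the generating set of $G$.
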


\begin{proof}
 Let $S$ and $T$ be finite generating sets of $G$ and $H$ respectively.
 Without loss of generality, we can assume that $T\subseteq S.$
 
 Now, given $h\in H^{fin},$ we have $h\in G^{fin}.$
 Since $G$ is polynomially full, we have $h\in G^{pol}.$
  
 It is easy to see that $C^{H}(h)\subseteq C^{G}(h).$
 Now, for all $f\in H,$ we have $\lVert f\rVert_{T}\geq\lVert f\rVert_{S}.$
 Hence, $C^{H}_{l}(h)\subseteq\cup_{i=0}^{l}C^{G}_{i}(h).$
 Thus, we have $|C^{H}_{l}(h)|\leq|\cup_{i=0}^{l}C^{G}_{i}(h)|.$
 Since $h\in G^{pol},$ right hand side is bounded by a polynomial of $l.$
 Therefore, we get $h\in H^{pol}.$
 Hence, $H$ is polynomially full.
\end{proof}

In the following, we show that the class of polynomially full groups is closed under taking direct products.
\begin{Prop}
 Let $G$ and $H$ be finitely generated groups. Then $G$ and $H$ are polynomially full if and only if $G\times H$ is polynomially full.
\end{Prop}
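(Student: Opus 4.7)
The plan is to exploit two elementary structural facts about the direct product: conjugacy classes decompose componentwise, and, with a suitably chosen generating set, the word-length norm is additive across the factors. Specifically, let $S_G$ and $S_H$ be finite generating sets of $G$ and $H$, and take
$$S_{G\times H}=(S_G\times\{e_H\})\cup(\{e_G\}\times S_H)$$
as the generating set of $G\times H$. Because the two sets of generators commute, any word can be rearranged to put $G$-generators before $H$-generators, which gives
$$\lVert(g,h)\rVert_{G\times H}=\lVert g\rVert_G+\lVert h\rVert_H.$$
Moreover, conjugation in $G\times H$ is componentwise, so $C^{G\times H}((g,h))=C^G(g)\times C^H(h)$, and $(g,h)\in(G\times H)^{fin}$ if and only if $g\in G^{fin}$ and $h\in H^{fin}$.

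For the forward direction, I would suppose $G$ and $H$ are polynomially full and take any $(g,h)\in(G\times H)^{fin}$. Then $g\in G^{pol}$ and $h\in H^{pol}$, so there exist constants with $n_{g,i}\leq c_1 i^{d_1}$ and $n_{h,j}\leq c_2 j^{d_2}$ (with the convention that $n_{g,0}=\delta_{g,e}$, etc.). The additivity of norms splits $C^{G\times H}_l((g,h))$ as a finite disjoint union indexed by $i+j=l$, yielding
$$|C^{G\times H}_l((g,h))|=\sum_{i+j=l}n_{g,i}\cdot n_{h,j}\leq C\cdot l^{d_1+d_2+1}$$
for some $C>0$. Hence $(g,h)\in(G\times H)^{pol}$, so $(G\times H)^{fin}\subseteq(G\times H)^{pol}$ and $G\times H$ is polynomially full.

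For the converse, I would appeal directly to Proposition~\ref{polfullinherited}: the subgroups $G\times\{e_H\}\leq G\times H$ and $\{e_G\}\times H\leq G\times H$ are both finitely generated and isomorphic to $G$ and $H$ respectively, so if $G\times H$ is polynomially full then so are $G$ and $H$. The only minor subtlety is that polynomial fullness is an intrinsic property of the group (independent of the choice of finite generating set), which follows because any two word norms on a finitely generated group are bi-Lipschitz equivalent, so polynomial growth of conjugacy classes is preserved under change of generators. The whole argument is largely formal; I do not foresee a substantive obstacle, since the only quantitative step is the convolution estimate above, which reduces to the standard fact that the Cauchy product of two polynomially bounded sequences is again polynomially bounded.
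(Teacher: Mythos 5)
Your proposal is correct and follows essentially the same route as the paper: the same generating set $(S_G\times\{e_H\})\cup(\{e_G\}\times S_H)$, the additivity of the word norm across factors, the convolution decomposition $|C^{G\times H}_l((g,h))|=\sum_{i+j=l}n_{g,i}\cdot n_{h,j}$ for the forward direction, and Proposition~\ref{polfullinherited} applied to the factor subgroups for the converse. Your added remark that polynomial fullness is independent of the generating set is a worthwhile (if routine) point that the paper leaves implicit.
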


\begin{proof}
 Let $S$ and $T$ be finite generating sets for $G$ and $H$ respectively. Then, 
 $$W:=S\times\{e_H\}\cup \{e_G\}\times T$$
 is a finite generating set for $G\times H.$
 Let $\lVert .\rVert_G,\ \lVert .\rVert_H,$ and $\Vert .\rVert_{(G\times H)}$ be the word-length norms on $G,H,$ and $G\times H$ respectively, corresponding to the generating sets $S,T,$ and $W$ respectively.
 
 Assume $G$ and $H$ are polynomially full.
 Given $(g,h)\in (G\times H)^{fin},$ we have $g\in G^{fin}$ and $h\in H^{fin}.$
 Since $G$ and $H$ are polynomially full, we have $g\in G^{pol}$ and $h\in H^{pol}.$
 It is not hard to see that $\Vert (g',h')\rVert_{(G\times H)}=\lVert g'\rVert_G+\lVert h'\rVert_H$ for all $g'\in G$ and $h'\in H.$
 So we have 
 $$|C^{G\times H}_n((g,h))|=\sum_{i=0}^{n} |C^G_i(g)|\cdot |C^H_{n-i}(h)|.$$
 All the terms in the sum are bounded by polynomials of $n.$ Thus, the sum is bounded by a polynomial of $n.$
 Hence, $(g,h)\in (G\times H)^{pol}.$
 Therefore $G\times H$ is polynomially full.
 
 Converse follows from Proposition ~\ref{polfullinherited}.
\end{proof}

In the following, we give a sufficient condition for a group to be polynomially full.
Recall that a subset of a group is said to grow polynomially if the number of elements in the intersection of the subset with the closed ball of radius $l$ centered around the identity element is bounded by a 
fixed polynomial of $l.$
\begin{Lemma}\label{torsiongrowspol}
 Let $G$ be a finitely generated group. If $G^{fin}$ grows polynomially, then $G$ is polynomially full. 
\end{Lemma}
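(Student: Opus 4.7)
The plan is to show directly that every finite order element has a conjugacy class of polynomial growth, which is condition (1) in Proposition \ref{equivalentconditions} defining polynomial fullness. The key observation is that conjugation preserves the order of an element, so the entire conjugacy class of a finite order element sits inside $G^{fin}$.

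More precisely, let $g \in G^{fin}$ and set $d = \operatorname{order}(g)$. For any $f \in G$ we have $(fgf^{-1})^d = fg^d f^{-1} = e$, so $\operatorname{order}(fgf^{-1}) \mid d$, which gives $C(g) \subseteq G^{fin}$. Intersecting with the sphere of radius $l$ in the word-length metric, I get
\[
C_l(g) \;\subseteq\; \{x \in G^{fin} : \|x\|_w = l\} \;\subseteq\; \{x \in G^{fin} : \|x\|_w \leq l\}.
\]
By hypothesis, $G^{fin}$ grows polynomially, so there exist $c \in \mathbb R_{>0}$ and $d' \in \mathbb{N}$ with $|\{x \in G^{fin} : \|x\|_w \leq l\}| \leq c \cdot l^{d'}$ for all $l \in \mathbb{N}$. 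Therefore $n_{g,l} = |C_l(g)| \leq c \cdot l^{d'}$, which shows $g \in G^{pol}$.

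Since this holds for every $g \in G^{fin}$, I conclude $G^{fin} \subseteq G^{pol}$, which is exactly condition (1) of Proposition \ref{equivalentconditions}. Hence $G$ is polynomially full. There is no real obstacle here: the argument is a one-line containment of sets followed by the hypothesis. The content of the lemma is simply that the polynomial growth of $G^{fin}$ as a whole automatically controls the growth of each individual conjugacy class within it.
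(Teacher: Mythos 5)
Your proof is correct and follows the same route as the paper: observe that $C(g)\subseteq G^{fin}$ for every finite order element $g$, so the polynomial growth of $G^{fin}$ bounds the growth of each such conjugacy class, giving $G^{fin}\subseteq G^{pol}$. You simply spell out the details (order preservation under conjugation, sphere versus ball) that the paper leaves implicit.
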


\begin{proof}
 For all $g\in G^{fin}$ we have $C^G(g)\subseteq G^{fin}.$ Since $G^{fin}$ grows polynomially, $C^G(g)$ also grows polynomially.
 Hence, $g\in G^{pol}.$ Therefore, $G$ is polynomially full.
\end{proof}

Wolf \cite[Theorem 3.11]{Wolf} showed that for finitely generated group $\Sigma$ and a subgroup $\Gamma$ of finite index, we have that
  \begin{enumerate}
      \item
      $\Gamma$ is finitely generated, and

      \item
      if $\Gamma$ has polynomial growth, then $\Sigma$ also has polynomial growth.
  \end{enumerate}
  
He also showed in \cite[Theorem 3.2]{Wolf} that, if $\Gamma$ is a finitely generated nilpotent group, then $\Gamma$ has polynomial growth.

Gromov \cite{Gromov} showed that if a finitely generated group $\Gamma$ has polynomial growth, then it is virtually nilpotent.
Recall that a group is called virtually nilpotent, if it contains a nilpotent subgroup of finite index.

In the following, we show that the class of polynomially full groups includes finitely generated virtually nilpotent groups.
\begin{Cor}\label{virtuallynilpotent}
 Let $G$ be a finitely generated group. If $G$ is virtually nilpotent, then $G$ is polynomially full.
\end{Cor}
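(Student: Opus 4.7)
The plan is to reduce the statement to Lemma \ref{torsiongrowspol} by showing that $G^{fin}$ grows polynomially, which is in fact automatic once we know that $G$ itself has polynomial growth. So the whole argument consists in assembling the Wolf--Gromov theorems quoted right before the corollary.

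First, I would invoke the definition of virtual nilpotence to produce a nilpotent subgroup $\Gamma \leq G$ of finite index. By Wolf's Theorem 3.11 (the first bullet stated in the excerpt), $\Gamma$ is finitely generated, since it has finite index in the finitely generated group $G$. Then I would apply Wolf's Theorem 3.2, which says that every finitely generated nilpotent group has polynomial growth, to conclude that $\Gamma$ has polynomial growth. Finally, the second bullet of Wolf's Theorem 3.11 transfers polynomial growth from the finite-index subgroup $\Gamma$ back up to $G$, so $G$ itself has polynomial growth.

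At this point, the inclusion $G^{fin} \subseteq G$ trivially gives $|G^{fin} \cap B_l(e)| \leq |B_l(e)|$, and the right-hand side is bounded by a fixed polynomial in $l$ by the previous step. Hence $G^{fin}$ grows polynomially, and Lemma \ref{torsiongrowspol} immediately yields that $G$ is polynomially full.

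There is essentially no obstacle here beyond correctly chaining the cited results; the only thing to be careful about is that Wolf's theorem on finite generation of finite-index subgroups is needed before his polynomial-growth theorem can be applied to $\Gamma$, and that polynomial growth then has to be pushed back up from $\Gamma$ to $G$ via the other direction of Wolf's theorem. Once those two passes are done in the right order, the appeal to Lemma \ref{torsiongrowspol} finishes the proof in one line. Note that Gromov's converse is not needed for this direction; it is only mentioned in the excerpt for context.
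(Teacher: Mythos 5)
Your proposal is correct and follows exactly the same route as the paper: Wolf's finite generation result for the finite-index nilpotent subgroup, his polynomial growth theorem for that subgroup, transfer of polynomial growth back up to $G$, and then Lemma \ref{torsiongrowspol} applied to $G^{fin}\subseteq G$. The paper's proof is a more terse version of the same chain of citations, and you are also right that Gromov's converse plays no role here.
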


\begin{proof}
 Let $H$ be a nilpotent subgroup of $G$ with finite index. By \cite[Theorem 3.11]{Wolf}, $H$ is also finitely generated.
 So by \cite[Theorem 3.2]{Wolf}, $H$ has polynomial growth.
 Hence, by \cite[Theorem 3.11]{Wolf}, $G$ has polynomial growth.
 Thus, $G^{fin}$ also has polynomial growth.
 Therefore, $G$ is polynomially full by Lemma ~\ref{torsiongrowspol}.
\end{proof}

In the following propositions, we derive formulas for the number $\mathcal{F}_G$ for some polynomially full groups. Recall that when $G$ is polynomially full, $\mathcal{F}_G$ is the rank of the 
free abelian groups $K^{fin}_{0}(C^{*}_{r}G)\cong K^{fin}_{0}(C^{*}G)\cong\bigoplus_{i=1}^{\mathcal{F}_G}\mathbb{Z}.$

In the following, we give a formula for $\mathcal{F}_{G}$ for a finite abelian group $G.$
\begin{Prop}\label{finiteabelian}
 For $G=\mathbb{Z}/n_{1}\times\cdots\times\mathbb{Z}/n_{k},$ we have the following formula 
 $$\mathcal{F}_G=\sum_{d_{1}|n_{1}}\cdots\sum_{d_{k}|n_{k}}\frac{\phi(d_{1})\cdots\phi(d_{k})}{\phi(\operatorname{lcm}(d_{1},\cdots ,d_{k}))},$$
 where $\phi$ denotes Euler's totient function, $\operatorname{lcm}$ denotes the least common multiple function,
 and the sums run over positive divisors $d_i$'s of $n_i$'s.
\end{Prop}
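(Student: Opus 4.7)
The plan is to reduce the computation of $\mathcal{F}_G$ for a finite abelian group to counting cyclic subgroups, and then express this count in terms of Euler's totient by the standard bijection between elements of order $m$ and generators of cyclic subgroups of order $m$.

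First, I would observe that since $G$ is abelian, every conjugacy class is a singleton: $C(h)=\{h\}$ for all $h\in G$. Consequently, for $g,h\in G^{fin}_d$, the relation $g\simd h$ reduces to: there exists $a\in\mathbb{N}$ such that $g^a=h$. Since $g$ and $h$ have the same order $d$, this is equivalent to saying that $g$ and $h$ generate the same cyclic subgroup of $G$. Hence the equivalence classes of $\simd$ on $G^{fin}_d$ are in natural bijection with the cyclic subgroups of $G$ of order $d$, and therefore
\[
\mathcal{F}_G=\sum_{d=1}^{\infty}|\widetilde{G}^{fin}_d|=\#\{\text{cyclic subgroups of } G\}.
\]

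Next, I would count cyclic subgroups using the fact that a cyclic group of order $m$ has exactly $\phi(m)$ generators, so each cyclic subgroup of order $m$ in $G$ contributes $\phi(m)$ elements of order $m$. Writing $N_m(G):=\#\{g\in G:\operatorname{order}(g)=m\}$, we get
\[
\mathcal{F}_G=\sum_{m}\frac{N_m(G)}{\phi(m)}.
\]

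Then I would compute $N_m(G)$ for $G=\mathbb{Z}/n_1\times\cdots\times\mathbb{Z}/n_k$. An element $(g_1,\ldots,g_k)$ has order $\operatorname{lcm}(\operatorname{order}(g_1),\ldots,\operatorname{order}(g_k))$. Since the number of elements of order exactly $d_i$ in $\mathbb{Z}/n_i$ equals $\phi(d_i)$ if $d_i\mid n_i$ and $0$ otherwise, we have
\[
N_m(G)=\sum_{\substack{d_i\mid n_i\\ \operatorname{lcm}(d_1,\ldots,d_k)=m}}\phi(d_1)\cdots\phi(d_k).
\]

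Finally, substituting this into the previous sum and interchanging the order of summation (each tuple $(d_1,\ldots,d_k)$ with $d_i\mid n_i$ contributes to the unique $m=\operatorname{lcm}(d_1,\ldots,d_k)$) yields
\[
\mathcal{F}_G=\sum_{d_1\mid n_1}\cdots\sum_{d_k\mid n_k}\frac{\phi(d_1)\cdots\phi(d_k)}{\phi(\operatorname{lcm}(d_1,\ldots,d_k))},
\]
as claimed. The main conceptual step is the first one, identifying $\simd$-classes with cyclic subgroups in the abelian setting; everything else is a routine combinatorial rearrangement. I do not expect any serious obstacle, since $G$ is finite (so $G=G^{fin}$ and polynomially fullness is automatic) and the counting argument is completely standard.
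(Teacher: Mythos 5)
Your proof is correct and follows essentially the same route as the paper: both rest on the facts that in an abelian group the $\simfin$-class of an element of order $m$ consists of the $\phi(m)$ generators of the cyclic subgroup it generates, and that the elements whose $i$-th components have order $d_i$ number $\phi(d_1)\cdots\phi(d_k)$ and have total order $\operatorname{lcm}(d_1,\ldots,d_k)$. The only difference is bookkeeping: the paper partitions $G$ into blocks indexed by divisor tuples and divides each block's size by the class size, while you group by the total order $m$ and count cyclic subgroups, which is the same double sum rearranged.
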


\begin{proof}
 Let's define an equivalence relation $\sim$ on $G=\mathbb{Z}/n_{1}\times\cdots\times\mathbb{Z}/n_{k},$ which is coarser than $\simfin :$
 
 We say $(x_{1},\cdots ,x_{k})\sim (x'_{1},\cdots ,x'_{k})$ if and only if, for all $i\in\{1,\cdots ,k\}$ $x_{i}\simfin  x'_{i}$ in $\mathbb{Z}/n_{i}.$
 It is easy to see that, $\sim$ is an equivalence relation on $G.$
 
 Since homomorphic images of equivalent elements are equivalent, by looking at the projections to components, we can conclude that 
 $(x_{1},\cdots ,x_{k})\simfin (y_{1},\cdots ,y_{k})$ implies $(x_{1},\cdots ,x_{k})\sim (y_{1},\cdots ,y_{k}),$ 
 for all $(x_{1},\cdots ,x_{k}), (y_{1},\cdots ,y_{k})\in G.$
 So $\sim$ is coarser than $\simfin .$
 
 For all $d_{1},\cdots ,d_{k}\in\mathbb{N}$ with $d_{1}|n_{1},\cdots ,d_{k}|n_{k},$ define 
 $$G_{d_{1},\cdots ,d_{k}}:=\{(x_{1},\cdots ,x_{k})\in G: \operatorname{gcd}(n_{i},x_{i})=\frac{n_{i}}{d_{i}}\text{ for }i\in\{1,\cdots ,k\}\}.$$
 It is easy to see that $G_{d_{1},\cdots ,d_{k}}=[(x_{1},\cdots ,x_{k})]_{\sim}$ for all $(x_{1},\cdots ,x_{k})\in G_{d_{1},\cdots ,d_{k}},$
 where $[(x_{1},\cdots ,x_{k})]_{\sim}$ is the equivalence class of the element $(x_{1},\cdots ,x_{k})$ with respect to $\sim.$
 
 Now, for all $(x_{1},\cdots ,x_{k})\in G_{d_{1},\cdots ,d_{k}},$ we have 
 \begin{align*}
  |[(x_{1},\cdots ,x_{k})]_{\simfin }|	&= \phi(\operatorname{order}((x_{1},\cdots ,x_{k})))\\
						&= \phi(\operatorname{lcm}(\operatorname{order}(x_{1}),\cdots ,\operatorname{order}(x_{k})))\\
						&= \phi(\operatorname{lcm}(d_{1},\cdots ,d_{k}))
 \end{align*}
 and we have $|G_{d_{1},\cdots ,d_{k}}|=\phi(d_{1})\cdots\phi(d_{k}).$
 Hence, we have $|G_{d_{1},\cdots ,d_{k}}/_{\simfin} |=\frac{\phi(d_{1})\cdots\phi(d_{k})}{\phi(\operatorname{lcm}(d_{1},\cdots ,d_{k}))}.$
 Thus, we get 
 \begin{align*}
  \mathcal{F}_{G}	&= |G^{fin}/_{\simfin} |\\
			&= |G/_{\simfin} |\\
			&= \sum_{d_{1}|n_{1}}\cdots\sum_{d_{k}|n_{k}} |G_{d_{1},\cdots ,d_{k}}/_{\simfin} |\\
			&= \sum_{d_{1}|n_{1}}\cdots\sum_{d_{k}|n_{k}} \frac{\phi(d_{1})\cdots\phi(d_{k})}{\phi(\operatorname{lcm}(d_{1},\cdots ,d_{k}))}.
 \end{align*}

\end{proof}

In the following, we give a formula for $\mathcal{F}_{G}$ for a finitely generated abelian group $G.$
\begin{Cor}
 For $G=\mathbb{Z}/n_{1}\times\cdots\times\mathbb{Z}/n_{k}\times\mathbb{Z}^{m},$ we have the following formula 
 $$\mathcal{F}_G=\sum_{d_{1}|n_{1}}\cdots\sum_{d_{k}|n_{k}}\frac{\phi(d_{1})\cdots\phi(d_{k})}{\phi(\operatorname{lcm}(d_{1},\cdots ,d_{k}))},$$
 where the sums run over positive divisors $d_i$'s of $n_i$'s.
\end{Cor}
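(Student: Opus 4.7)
The plan is to reduce the corollary directly to Proposition \ref{finiteabelian} by showing that the extra $\mathbb{Z}^{m}$ factors contribute nothing to $\mathcal{F}_{G}$. Writing $H:=\mathbb{Z}/n_{1}\times\cdots\times\mathbb{Z}/n_{k}$, so that $G=H\times\mathbb{Z}^{m}$, I would first observe that $\mathbb{Z}^{m}$ is torsion-free and therefore every element of $G^{fin}$ has the form $(x,0)$ with $x\in H$. Thus the inclusion $H\hookrightarrow G$, $x\mapsto (x,0)$, restricts to a bijection $H\xrightarrow{\sim} G^{fin}$.

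Next I would verify that this bijection sends the equivalence relation $\simfin$ on $H$ to the equivalence relation $\simfin$ on $G^{fin}$. Since $G$ is abelian, conjugacy classes are singletons, so by definition $(x,0)\simfin (y,0)$ in $G$ if and only if $(x,0)$ and $(y,0)$ have the same (finite) order $d$ and there exists $a\in\mathbb{N}$ with $(x,0)^{a}=(y,0)$, i.e.\ $a x=y$ in $H$. This is exactly the statement that $x\simfin y$ in $H$. So the bijection $H\cong G^{fin}$ descends to a bijection $H/_{\simfin}\cong G^{fin}/_{\simfin}$, which gives $\mathcal{F}_{G}=\mathcal{F}_{H}$.

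Finally, I would apply Proposition \ref{finiteabelian} to the finite abelian group $H$ to obtain the claimed formula. There is essentially no obstacle here: the only point that requires a moment of care is the verification that $\simfin$ in $G$ pulled back to $H$ really is the relation $\simfin$ on $H$ (as opposed to something coarser, which could in principle happen if raising to a power in $G$ produced new identifications via the $\mathbb{Z}^{m}$ factor). The torsion-freeness of $\mathbb{Z}^{m}$ rules this out, since the $\mathbb{Z}^{m}$-component of a torsion element and all its powers is forced to remain $0$.
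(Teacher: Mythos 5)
Your proposal is correct and is essentially the paper's own argument: the paper also sets $H=\mathbb{Z}/n_{1}\times\cdots\times\mathbb{Z}/n_{k}$, notes that $G$ is abelian and $G^{fin}\cong H$, and invokes Proposition~\ref{finiteabelian}. You merely spell out the (easy but worthwhile) verification that the identification $G^{fin}\cong H$ respects $\simfin$, which the paper leaves implicit.
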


\begin{proof}
 Let $H=\mathbb{Z}/n_{1}\times\cdots\times\mathbb{Z}/n_{k}.$ Since $G$ is abelian and $G^{fin}\cong H,$ result follows from Proposition ~\ref{finiteabelian}.
\end{proof}

\begin{Rem}
 If we take $G=\mathbb{Z}/n,$ then the above formula tells that $\mathcal{F}_G$ is equal to the number of positive divisors of $n.$
\end{Rem}

In the following, we give a formula for $\mathcal{F}_{G}$ for a dihedral group $G.$
\begin{Prop}
 For $G=D_{n},$ we have 
 \[\mathcal{F}_{G}= \begin{cases} 
      \mathcal{F}_{\mathbb{Z}/n}+1 & \text{if } n \text{ is odd}\\
      \mathcal{F}_{\mathbb{Z}/n}+2 & \text{otherwise ,}\\ 
   \end{cases}
\]
where $D_{n}$ is the dihedral group of order $2n.$
\end{Prop}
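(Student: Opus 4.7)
The plan is to use the presentation $D_n=\langle r,s\mid r^n=s^2=e,\ srs^{-1}=r^{-1}\rangle$, so that $G$ decomposes as $\langle r\rangle\sqcup s\langle r\rangle$ into rotations $r^i$ and reflections $sr^i$, every element of which has finite order (so $G^{fin}=G$). I would count $\simfin$-equivalence classes on the rotations and on the reflections separately, and then verify that no reflection is $\simfin$-equivalent to any rotation.

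For the rotations, I would use the dihedral relation $sr^is^{-1}=r^{-i}$ to note that $C(r^i)=\{r^i,r^{-i}\}$ inside $D_n$, and then observe that two rotations of the same order $d=n/\gcd(i,n)$ generate the unique subgroup of order $d$ in $\langle r\rangle$; hence some power of one equals the other, making them $\simd$- and therefore $\simfin$-equivalent. Since $\simfin$ forces equal orders, the rotations contribute exactly one class for each positive divisor of $n$, which is $\mathcal{F}_{\mathbb{Z}/n}$ by the remark following Proposition~\ref{finiteabelian}.

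For the reflections, a direct computation from $sr=r^{-1}s$ gives $r^j(sr^i)r^{-j}=sr^{i-2j}$ and $(sr^k)(sr^i)(sr^k)^{-1}=sr^{2k-i}$, so $C(sr^i)=\{sr^{i+2m}:m\in\mathbb{Z}\}\cup\{sr^{-i+2m}:m\in\mathbb{Z}\}$ read modulo $n$. When $n$ is odd, $2$ is a unit mod $n$ and this set exhausts $s\langle r\rangle$, yielding a single class; when $n$ is even, it equals exactly the reflections whose index has the same parity as $i$, yielding two classes. Since every reflection has order $2$, one has $(sr^i)^a\in\{e,sr^i\}$, so $\simd$ on reflections coincides with ordinary $D_n$-conjugacy.

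To exclude cross-equivalence, note that a reflection has order $2$, so a rotation $\simfin$ to it must also have order $2$; this forces $n$ even and the rotation to be $r^{n/2}$, whose conjugacy class is the singleton $\{r^{n/2}\}$ (since $r^{n/2}$ is central). But powers of a reflection lie in $\{e,sr^i\}$ and never equal $r^{n/2}$. Summing the two contributions yields $\mathcal{F}_{D_n}=\mathcal{F}_{\mathbb{Z}/n}+1$ when $n$ is odd and $\mathcal{F}_{\mathbb{Z}/n}+2$ when $n$ is even. The only place requiring a little care is the parity bookkeeping for reflection conjugacy classes in the even case; the rest reduces to routine manipulation of the dihedral relations.
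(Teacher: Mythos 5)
Your proposal is correct and follows essentially the same route as the paper: split $D_n$ into rotations and reflections, show the rotations contribute one $\simfin$-class per divisor of $n$ (i.e.\ $\mathcal{F}_{\mathbb{Z}/n}$ classes), and use the relation $srs^{-1}=r^{-1}$ to show the reflections form one class when $n$ is odd and two when $n$ is even. Your explicit check that no reflection is $\simfin$-equivalent to a rotation is a point the paper handles only implicitly (by noting conjugates of powers of rotations stay in $\langle r\rangle$), but otherwise the two arguments coincide.
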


\begin{proof}
 Let $x,y$ be the generators of $D_{n}$ with $x^2=y^n=(xy)^2=1.$
 For all $a,b\in\mathbb{Z},$ we have $(xy^{a})\cdot y^{b}\cdot (xy^{a})^{-1}=(xy^{a})\cdot y^{b}\cdot y^{-a}x=x^2y^{-b}=y^{-b},$
 and $y^{a}\cdot y^{b}\cdot y^{-a}=y^{b}.$
 So for all $a\in\mathbb{Z},$ we get $[y^{a}]_{\simfin }\subseteq \{1,y,\cdots ,y^{n-1}\},$
 where $[y^{a}]_{\simfin }$ denotes the equivalence class of $y^{a}$ in $D_{n}.$
 
 Now let's show that $y^{a}\simfin  y^{b}$ if and only if $\operatorname{gcd}(n,a)=\operatorname{gcd}(n,b):$
 
 For the forward direction, we have 
 \begin{align*}
		y^{a}\simfin  y^{b}	&\implies \operatorname{order}(y^{a})=\operatorname{order}(y^{b})\\
													&\implies \operatorname{gcd}(n,a)=\operatorname{gcd}(n,b).
 \end{align*}
 For the converse, assume we have $d=\operatorname{gcd}(n,a)=\operatorname{gcd}(n,b)$ for some $d\in\mathbb{N}.$
 So we get $\operatorname{order}(y^{a})=\operatorname{order}(y^{b})=\frac{n}{d}$ and 
 $\operatorname{gcd}(\frac{n}{d},\frac{a}{d})=\operatorname{gcd}(\frac{n}{d},\frac{b}{d})=1.$
 Hence, there exists $c\in\mathbb{N}$ such that $c\cdot\frac{a}{d}\equiv\frac{b}{d}\Mod{\frac{n}{d}}.$
 Thus, we get $ac\equiv b\Mod{n}.$
 So $(y^{a})^{c}=y^{ac}=y^{b}.$
 Therefore, we get $y^{a}\simfin  y^{b}.$	
	
	Now for all $a,b\in\mathbb{Z},$ $xy^{a}$ has order $2,$ and 
	\begin{align*}
	(xy^{b})\cdot xy^{a}\cdot (xy^{b})^{-1}	&=(xy^{b})\cdot xy^{a}\cdot y^{-b}x\\
																					&=xy^{b}xy^{a-b}x\\
																					&=xy^{b}x^2y^{b-a}\\
																					&=xy^{b}y^{b-a}\\
																					&=xy^{2b-a}
  \end{align*}
	and 
	\begin{align*}
	y^{b}\cdot xy^{a}\cdot y^{-b}	&=xy^{-b}y^{a-b}\\
																&=xy^{a-2b}.
	\end{align*}
	So we get 
	\begin{align*}
		[xy^{a}]_{\simfin }	&=\{xy^{a+2b}\ |\ b\in\mathbb{Z}\}\cup\{xy^{-a+2b}\ |\ b\in\mathbb{Z}\}\\
													&=\{xy^{a+2b}\ |\ b\in\mathbb{Z}\}.
	\end{align*}
	Hence, $xy^{a}\simfin  xy^{b}$ if and only if $\exists c\in\mathbb{Z}$ such that $a+2c\equiv b\Mod{n}.$
	Thus, we get
	\[\mathcal{F}_{G}= \begin{cases} 
      \mathcal{F}_{\mathbb{Z}/n}+1 & \text{if } n \text{ is odd}\\
      \mathcal{F}_{\mathbb{Z}/n}+2 & \text{otherwise}\\ 
   \end{cases}
\].
\end{proof}

\begin{Rem}
	Let $D_{\infty}$ be the infinite dihedral group. Let $x,y$ be the elements generating $D_{\infty}$ with relations $x^{2}=(xy)^{2}=1.$
	Since $D_{\infty}$ is virtually nilpotent (it contains the subgroup $\langle y\rangle\cong\mathbb{Z}$ of index 2), it is polynomially full by Corollary 
	~\ref{virtuallynilpotent}. Straightforward calculation shows that $\{1,x,xy\}$ is a complete set of representatives for the equivalence classes in $D^{fin}_{\infty}/_{\simfin} .$
	Hence, $\mathcal{F}_{D_{\infty}}=3.$
\end{Rem}

In the following, we give a formula of $\mathcal{F}_{G}$ for $G=\mathfrak{S}_n.$ 
\begin{Prop}
 For all $n\in\mathbb{N},$ $\mathcal{F}_{\mathfrak{S}_n}$ is equal to the number of conjugacy classes in $\mathfrak{S}_n,$ where $\mathfrak{S}_n$ is the symmetric group on a finite set of $n$ symbols.
\end{Prop}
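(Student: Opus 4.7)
The plan is to show that for $G=\mathfrak{S}_n$ the relation $\simfin$ on $G^{fin}=G$ coincides with ordinary conjugacy, so that $\mathcal{F}_{\mathfrak{S}_n}$ literally counts the conjugacy classes. One direction is immediate from the definition: if $g$ and $h$ are conjugate in $\mathfrak{S}_n$ then they have the same order and $g^{1}=g\in C(h)$, so $g\simd h$ and hence $g\simfin h$. The content is in the reverse implication.

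So assume $g\simfin h$ in $\mathfrak{S}_n$, meaning $d:=\operatorname{order}(g)=\operatorname{order}(h)$ and there is an integer $a\geq 1$ with $g^{a}\in C(h)$. First I would observe the standard identity $\operatorname{order}(g^{a})=d/\gcd(a,d)$; since $g^{a}$ is conjugate to $h$ it has the same order as $h$, namely $d$, which forces $\gcd(a,d)=1$. Next, writing $g$ as a product of disjoint cycles of lengths $\lambda_1,\ldots,\lambda_r$, one has $d=\operatorname{lcm}(\lambda_1,\ldots,\lambda_r)$, so the coprimality $\gcd(a,d)=1$ passes to each part: $\gcd(a,\lambda_i)=1$ for every $i$.

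The key elementary fact I would then invoke is that if $\sigma$ is a cycle of length $\ell$ and $\gcd(a,\ell)=1$, then $\sigma^{a}$ is again a single cycle of length $\ell$ (in general a cycle of length $\ell$ raised to the $a$-th power splits into $\gcd(a,\ell)$ cycles of length $\ell/\gcd(a,\ell)$). Applying this cycle by cycle to $g$ shows that $g^{a}$ has exactly the same cycle type as $g$. On the other hand, $g^{a}$ is conjugate to $h$, and conjugate permutations share cycle type. Therefore $g$ and $h$ have the same cycle type, so they are conjugate in $\mathfrak{S}_n$. This closes the loop and establishes $\simfin\,=\,$conjugacy on $\mathfrak{S}_n$, whence $\mathcal{F}_{\mathfrak{S}_n}$ equals the number of conjugacy classes (equivalently, the number of partitions of $n$).

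I do not expect a genuine obstacle here: the only subtlety is remembering to use the order constraint $\operatorname{order}(g)=\operatorname{order}(h)$ built into $\simfin$ to force $\gcd(a,d)=1$, without which the cycle type of $g^{a}$ could differ from that of $g$ (e.g.\ squaring a $4$-cycle yields two $2$-cycles). Once that coprimality is in hand, the cycle-structure computation is mechanical.
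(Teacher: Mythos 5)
Your proposal is correct and follows essentially the same route as the paper: both arguments reduce to the fact that when $\gcd(a,\operatorname{order}(g))=1$ the power $g^{a}$ has the same cycle type as $g$ and is therefore conjugate to it. You spell out more explicitly than the paper how the equality of orders built into $\simfin$ forces $\gcd(a,d)=1$, but the underlying idea is identical.
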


\begin{proof}
 For all $g\in\mathfrak{S}_n$ and for all $a\in\mathbb{N}$ with $\operatorname{gcd}(a,\operatorname{order}(g))=1,$ 
 the permutations $g$ and $g^a$ have the same cycle structures. So they are conjugates.
 Now, $\forall g,h\in\mathfrak{S}_n,$ we have $g\simfin  h$ if and only if $\exists a\in\mathbb{N}$ with $\operatorname{gcd}(a,\operatorname{order}(g))=1$ and $g^{a}\in C^{\mathfrak{S}_n}(h).$
 Therefore, we get $g\simfin  h$ if and only if $g\in C^{\mathfrak{S}_n}(h).$
\end{proof}


\begin{thebibliography}{99}

\bibitem{BaumConnes} P. Baum, A. Connes, N. Higson. \textit{Classifying space for proper actions and $K$-theory of group $C^*$-algebras.}
  $C^*$-algebras: 1943-1993 (San Antonio, TX, 1993), 240--291, Contemp. Math., 167, Amer. Math. Soc., Providence, RI, 1994.\\

\bibitem{Engel} A. Engel. \textit{Banach strong Novikov conjecture for polynomially contractible groups.} arXiv:1702.02269v2 [math.KT].\\

\bibitem{Fgvb} H. Figueroa, J. M. Gracia-Bond\'ia, and J. C. V\'arilly. \textit{Elements of Noncommutative Geometry.}
  Birkh\"auser Advanced Texts, Birkh\"auser (2000).\\

\bibitem{Gong} S. Gong. \textit{Finite Part of Operator $K$-Theory for Groups with Rapid Decay.}
  Journal of Noncommutative Geometry 9 (2015), 697--706.\\
	
\bibitem{Gromov} M. Gromov. \textit{Groups of polynomial growth and expanding maps.}
	Inst. Hautes \'Etudes Sci. Publ. Math. No. 53 (1981), 53--73.\\
 
\bibitem{Ranicki} J. Ranicki. \textit{Algebraic and geometric Surgery.}
  Oxford Mathematical Monograph (2002).\\

\bibitem{Roe} J. Roe. \textit{An index theorem on open manifolds. I.}
  J. Differential Geom. 27 (1988), No. 1, 87--113.\\

\bibitem{Sch} L. B. Schweitzer. \textit{A short proof that $M_n(A)$ is local if $A$ is local and Fr\'echet.}
  Int. J. Math. (1992), 581--589.\\

\bibitem{St} J. Stallings. \textit{Centerless groups-an algebraic formulation of Gottlieb's theorem.}
  Topology 4 (1965), 129--134.\\

\bibitem{Yu} S. Weinberger, G. Yu. \textit{Finite part of operator $K$-theory for groups finitely embeddable into Hilbert space and the degree of non-rigidity of manifolds.}
  Geometry and Topology 19 (2015), 2767--2799.\\
	
\bibitem{WeinbergerXieYu} S. Weinberger, Z. Xie, G. Yu. \textit{Additivity of higher rho invariants and nonrigidity of topological manifolds.} arXiv:1608.03661 [math.KT].\\	
	
\bibitem{Wolf} J.A. Wolf. \textit{Growth of finitely generated solvable groups and curvature of riemannian manifolds.}
  Journal of Differential Geometry, 2 (4) (1968), 421--446.\\

\bibitem{Yu2} G. Yu. \textit{The Novikov conjecture for algebraic $K$-theory of the group algebra over the ring of Schatten class operators.}
  Advances in Mathematics, Vol. 307 (2017), 727--753.\\

\end{thebibliography}
\end{document}